\newtheorem*{proposition}{Proposition}
\newtheorem{lemma}{Lemma}
\theoremstyle{definition}
\theoremstyle{remark}
\begin{document}

\title[]{Finding Structure in Sequences of real \\numbers via Graph Theory: a problem list}
\subjclass[2010]{05C90, 06A99} 
\keywords{}
\thanks{S.S. is supported by the NSF (DMS-2123224) and the Alfred P. Sloan Foundation. This work was part of a Washington Experimental Mathematics Laboratory (WXML) project in Fall 2020. }

\author[]{Dana G. Korssjoen \and Biyao Li \and Stefan Steinerberger \and \\Raghavendra Tripathi \and Ruimin Zhang}
\address{Department of Mathematics, University of Washington, Seattle, WA 98203, USA}

\email{danagk@uw.edu}
\email{biyao73@uw.edu}
\email{steinerb@uw.edu}
\email{raghavt@uw.edu}
\email{ruiminz@uw.edu}

\begin{abstract} We investigate a method of generating a graph $G=(V,E)$ out of an ordered list of $n$ distinct real numbers $a_1, \dots, a_n$. These graphs can be used to test for the presence of combinatorial structure in the sequence. We describe sequences exhibiting intricate hidden structure that was discovered this way. Our list includes sequences of Deutsch, Erd\H{o}s, Freud \& Hegyvari, Recaman, Quet, Zabolotskiy and Zizka. Since our observations are mostly empirical, each sequence in the list is an open problem.
\end{abstract}
\maketitle

\vspace{-13pt}

\tableofcontents

\section{Outline}
\subsection{Introduction.} The purpose of this paper is three-fold:
\begin{enumerate}
\item First we explain a simple way in which one can turn any list of $n$ distinct real numbers $a_1, \dots, a_n$ into a 4-regular graph $G=(V,E)$.  This particular method has been proposed by the third author \cite{steinerberger} as a way of seeing whether these numbers could be the realization of i.i.d. random variables: if the $a_i$ are random, then the graph behaves like an expander graph.

\item This paper makes explicit use of the contrapositive statement: if the arising graph is highly structured, then this necessitates the presence of some rigorous structure in $a_1, \dots, a_n$. In particular, the regularity of the graph can be used as a way to detect a certain type of combinatorial structure in the sequence. We argue that the first nontrivial eigenvalue of the Graph Laplacian can be used as a quantitative way of detecting structure.

\item We then apply this philosophy to a large number of different sequences and we find the presence of fascinating combinatorial structures in many of them. This is the core part of the paper.  Most of these observations will be purely empirical -- thus, we effectively list a large number of open problems: prove that said sequence indeed exhibits the observed type of structure.\end{enumerate}

\subsection{A Type of Graph} 
\label{subsection:Graph}
 Let $a_1, \dots, a_n$ be a set of $n$ distinct, real numbers. We will associate to them a unique 4-regular graph as follows: we connect $a_{i}$ to $a_{i+1}$ (cyclically, so $a_n$ also gets connected to $a_1$). Then we order them in increasing size $a_{\pi(1)} < a_{\pi(2)} < \dots < a_{\pi(n)}$ and connect $a_{\pi(i)}$ to $a_{\pi(i+1)}$ and conclude by also connecting to $a_{\pi(n)}$ to $a_{\pi(1)}$. This results in a 4-regular graph.

\begin{center}
\begin{figure}[h!]
\begin{tikzpicture}
\node at (0,0) {\includegraphics[width=0.5\textwidth]{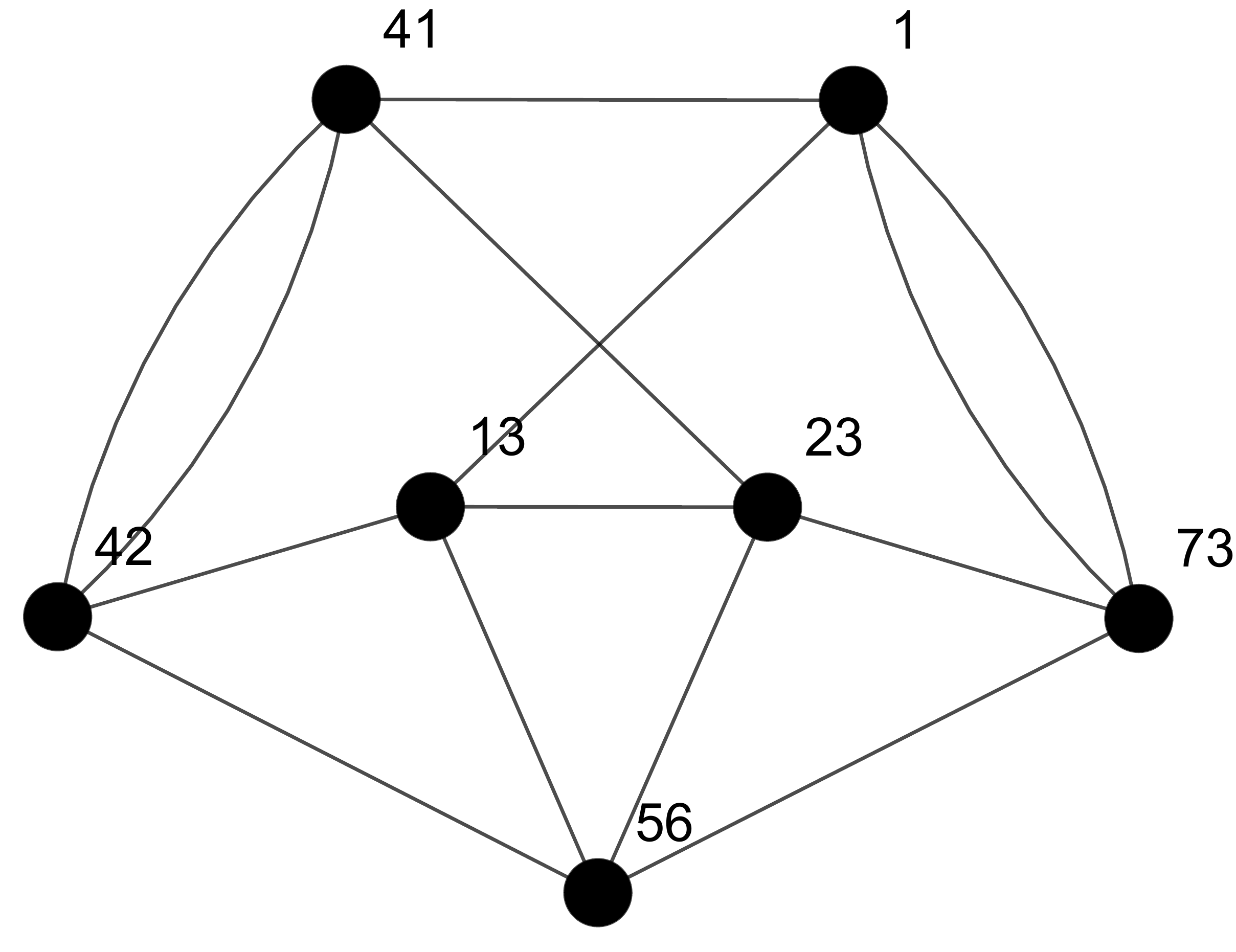}};
\end{tikzpicture}
\caption{The Graph for the first few digits of $\sqrt{2}$ in the order in which they appear: $1, 41, 42, 13, 56, 23, 73$.}
\label{fig:exa}
\end{figure}
\end{center}

We illustrate this with a simple example (see Fig. \ref{fig:exa}): when given the first few digits of $\sqrt{2}$ as $1, 41, 42, 13, 56, 23, 73$, we connect each element to its subsequent and prior element (i.e. 1 is connected to both 41 and 73, 56 is connected to 23 and 13). We then order the elements in increasing size, this leads to $1,13,23,41,42,56,73$ and once more connect each element to the subsequent and the prior element. We see that this can lead to multiple edges between two vertices (since 41,42 appear both in that order but also in the ordering by size). We also observe that as soon as $n \geq 3$, there are at most two edges between any pair of vertices. Moreover, the graph does not have any loops and is always 4-regular by construction.

\subsection{Looking for Structure} Once we are given such a graph, how are we supposed to see whether there is any `structure' present? Spectral Graph Theory provides a simple way of measuring the `structuredness' of such a graph (and thus of such a sequence) in a specific way. Given such a graph $G=(V,E)$, the adjacency matrix $A$ is an $n \times n$ matrix given by
$$ A_{ij} = \# \mbox{edges between}~x_i~\mbox{and}~x_j.$$
The matrix $A$ is symmetric and has real eigenvalues. Since the graph is 4-regular, the largest eigenvalue is 4. We will order the eigenvalues by decreasing absolute value, i.e.
$$ 4 = \lambda_1 \geq |\lambda_2| \geq |\lambda_3| \geq \dots \geq |\lambda_n| \geq 0.$$
\begin{quote}
\textbf{General Rule.} We have, for any 4-regular graph,
$$ 3.46 \dots \sim \sqrt{12} \pm \varepsilon \leq |\lambda_2| \leq 4.$$
Moreover, if the sequence $x_1, \dots, x_n$ is random, then the associated graph has $|\lambda_2|$ close to $\sqrt{12}$. Conversely, being close to 4 is indicative of some structure being present.
\end{quote}
To be more precise, if $x_1, \dots, x_n$ are i.i.d. random variables, it is a result of Friedman \cite{fried} that the graph will be close to a Ramanujan expander (meaning $|\lambda_2| \leq \sqrt{12} + \varepsilon$) with high probability as $n$ tends to infinity for any $\varepsilon > 0$. The Alon-Boppana bound \cite{nilli} shows that $|\lambda_2|$ cannot be substantially smaller than $\sqrt{12}$. As for the upper bound, we remark that $\lambda_2$ being very close to 4 would imply the existence of structure as follows: using the Rayleigh quotient,
$$ 4 - \lambda_2 = \inf_{f \in \mathbb{R}^n \atop \left\langle f, 1 \right\rangle = 0} \frac{ \sum_{(u,v) \in E} (f(u)-f(v))^2}{ \sum_{v \in V}{ f(v)^2}},$$
where $f:V \rightarrow \mathbb{R}$ ranges over all vectors of size $n$ that have mean value 0. This quantity being small means that it is possible to separate $V$ into two parts, the sets $\left\{v \in V: f(v) \leq 0\right\}$ and $\left\{v \in V: f(v) > 0\right\}$ such that there are relatively few edges that run between the two sets. If we know that $x_i$ is in one of the sets, then $x_{i-1}$ and $x_{i+1}$ are also likely to be in the set -- and so are elements that are just slightly larger or smaller. If $\lambda_2$ is close to $-4$, then $|\lambda_2| \sim 4$ and a similar reasoning shows the existence of an almost bipartite structure. In either case, there is a lot of structure in the graph and thus the sequence. We refer to \cite{chung, linial, kow, lub} for nice introductions. The main idea in \cite{steinerberger} was to check whether $|\lambda_2| \sim \sqrt{12}$ and to use this as a way of measuring whether a sequence of $n$ distinct reals behaves `like a sequence of i.i.d. random variables'.  The idea in this paper is the other direction, see whether  $|\lambda_2| \sim 4$ and to use this as a way of detecting structure. In practice, most examples in this paper
were so visually striking that we did not require any type of spectral test -- a notable exception being the EKG sequence in \S \ref{sec:EKG}.  

\section{A List of Graphs, Sequences and Open Problems}
\textbf{Introduction.} This section contains the core of the paper: a series of sequences for which the associated graphs were found to have have intricate, nontrivial behavior. The first two examples (\S 2.1 and \S 2.2) were already discussed in \cite{steinerberger}, all other examples are new. We were surprised at the wealth of examples and believe that there are many more. The complete list of sequences discussed in this paper, indexed by their number in the Online Encyclopedia of Integer Sequences (OEIS) when available, is as follows.

\begin{center}
\begin{table}[h!]
\begin{tabular}{c | c |  c}
Section & OEIS number & first defined  \\
\hline
2.1 &  not integer sequence & Kronecker/Weyl \\
2.2 &  not integer sequence & van der Corput (1935)\\
2.3 &  A036552 & Erd\H{o}s, Freud \& Hegyvari (1983) \\
2.3 &  A064736 & Erd\H{o}s, Freud \& Hegyvari (1983) \\
2.4 & A076641 &  Jooste (2002) \\
2.5 & A053985  & Schroeppel (1972)\\
2.5 & A065369  &  LeBrun (2001)\\
2.5 & A073791--A073796  & Wilson (2002)\\
2.5 & A073835 & Wilson (2002)\\
2.6 & A339571 & this paper (from A133058)\\
2.7 & A064413 & Ayres (2002)\\
2.8 & not integer sequence & this paper (from A014486) \\
2.9 & A005132 & Recaman Santos (1991) \\
2.10 & A127202 & Quet (2007) \\
2.11 & A281488 & Zabolotskiy (2017)\\
2.12 & A006068 & described by Gardner in 1972\\
2.13 & A140589 & Curtz (2008)\\
2.14 & A059893 & related to Stern-Boroct, Calin-Wilf\\
2.15 &  A347520	 & this paper (based on A053392)\\
2.16 & A057163 & Deutsch (1998), Karttunen (2000)
\vspace{10pt}
\end{tabular}
\caption{Sequences discussed in this paper per OEIS number.}
\end{table}
\end{center}
\vspace{-20pt}

We tried to trace the origin of the sequences using information provided in the OEIS, when available. To the best of our knowledge, we define two new sequences
(now A339571 and A347520) which both arise from taking existing sequences (A133058 and A014486, respectively) and removing duplicates.\\

\textbf{A Quick Word on Methodology.} Our examples are mostly experimental: we took the first $n$ elements of a sequence, construct and then investigate the graph. Usually, we investigated the $500 \leq n \leq 5000$ range for each sequence: once a sequence seems to lead to interesting graphs, the value of $n$ does not seem to matter very much. Our interpretation is that it's actually hard to produce interesting graphs `by accident' (even for small $n$) and that sequences have to have a good reason for doing so (and continue having this reason as $n$ grows).  Naturally, however, it is conceivable that graphs associated to a sequence become unstructured after a certain point. Most of our examples (2.1, 2.2, 2.4, 2.5, 2.8, 2.9, 2.12, 2.13, 2.14, 2.15, 2.16) exhibit such structured graphs that one would expect strong structural statements to be the cause of these graphs. Several other examples are so mysterious (2.3, 2.6, 2.7, 2.10, 2.11) that it is difficult to say what one would expect.\\
We visualized the graphs using Mathematica's \textsc{Graph} command. Embedding graphs into Euclidean space is a nontrivial task and it is conceivable that by using other graph embedding techniques more or additional structure can be found. Most of our examples are so striking that one would expect the embedding algorithm to not play much of a role (we refer to \S 3.3 for some more comments on this). 

\begin{center}
\begin{figure}[h!]
\begin{tikzpicture}
\node at (0,0) {\includegraphics[width=0.25\textwidth]{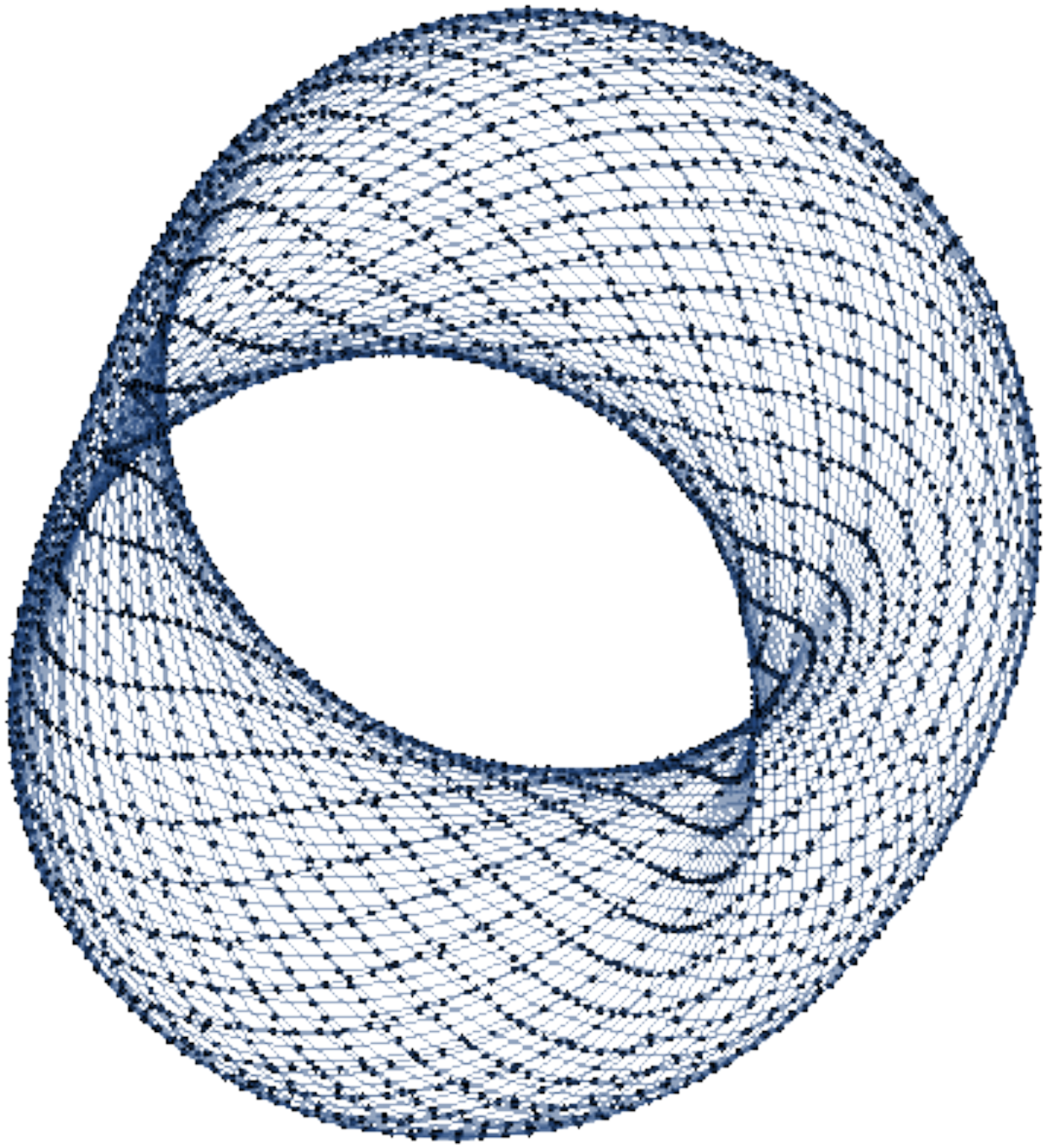}};
\node at (5,0) {\includegraphics[width=0.25\textwidth]{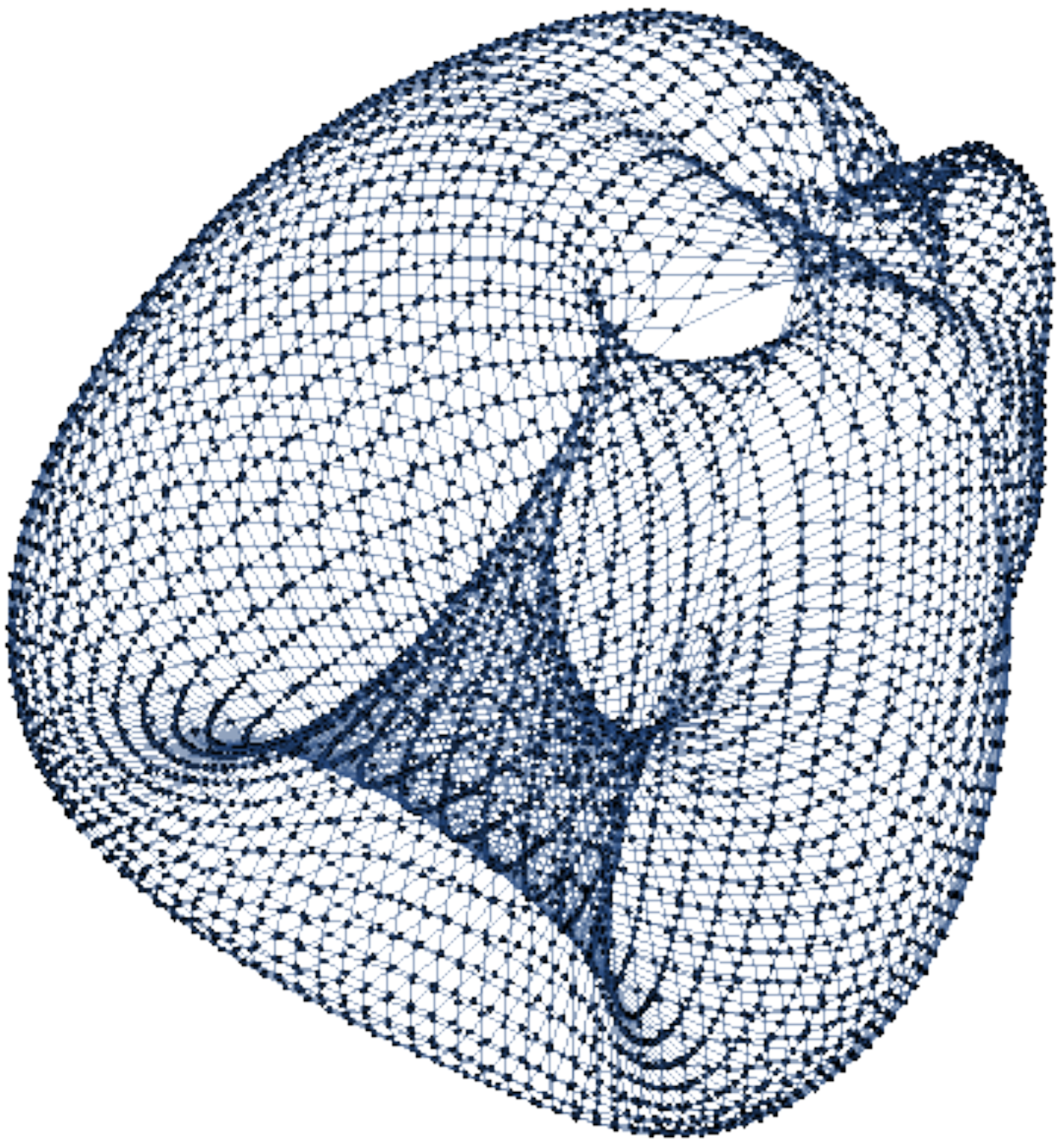}};
\end{tikzpicture}
\caption{The Kronecker sequence with $\alpha$ being the golden ratio (left) and the van der Corput sequence in base 2 (right).}
\label{fig:known}
\end{figure}
\end{center}

\vspace{-10pt}

\subsection{The Kronecker sequence}
Let $\alpha \in \mathbb{R}$ be irrational and let $a_n = \left\{n \alpha \right\}$, where $\left\{ \cdot \right\}$ denotes its fractional part, i.e. $\left\{ x \right\} = x - \left\lfloor x \right\rfloor$.
It was already discussed in \cite{steinerberger} that this sequence leads to an interesting torus-type structure. The `irrationality' of $\alpha$ seems to play a role (i.e. it seems to matter how well $\alpha$ can be approximated by rationals). We show an example for $\alpha = (1+\sqrt{5})/2$ in Fig. \ref{fig:known} where one can observe a two-dimensional object that seems to be nicely embedded in three dimensions: a torus with an additional twist.

\subsection{The van der Corput sequence} The van der Corput sequence (in base 2) is another classical object in discrete mathematics.  Its definition is somewhat whimsical: the element $a_n$ is obtained by writing $n$ in base 2, then reversing the order of the digits and then interpreting the resulting element as a real in $[0,1]$. This leads to the sequence (numerator and denominator given by the OEIS sequences A030101 and A062383)
$$ \frac{1}{2}, \frac{1}{4}, \frac{3}{4}, \frac{1}{8}, \frac{5}{8}, \frac{3}{8}, \frac{7}{8}, \frac{1}{16}, \dots$$
It is known for having optimal behavior with respect to how well distributed it is: for any $n \in \mathbb{N}$, the elements $a_1, \dots, a_n$ are fairly evenly distributed throughout $[0,1]$ in a way that can be made precise (we refer to the classical textbooks \cite{dick, drmota, kuipers} and the result of Schmidt \cite{schmidt}). The arising graphs were already discussed in \cite{steinerberger} and seem to have interesting and highly nontrivial structure, we refer to Fig. \ref{fig:known}.

\subsection{Two Sequences of Erd\H{o}s, Freud \& Hegyvari} In a 1983 paper, Erd\H{o}s, Freud \& Hegyvari \cite{erd} define the following nice sequence (A064736 in the Online Encyclopedia of Integer Sequences (OEIS)): we set $a_1 = 1, a_2=2$ and then define $a_{2n+2}$ as the smallest integer not appearing in $\left\{a_1, \dots, a_{2n}\right\}$ and $a_{2n+1} = a_{2n} \cdot a_{2n+2}$. The sequence starts like
$$ 1, 2, 6, 3, 12, 4, 20, 5, 35, 7, 56, 8, 72, \dots$$
This sequence has the interesting property that
$$ \lim \inf_{n \rightarrow \infty} \frac{\gcd(a_n, a_{n+1})}{n} \geq \frac{1}{2}.$$
The same paper defines a second sequence that is similar (A036552): $a_1 =1, a_2=2$ and, generally, $a_{2n}$ is the smallest integer that has not yet appeared while $a_{2n+1} = 2a_{2n}$. The sequences do not seem to have been studied very much: we found them mentioned in a survey article written by Neil Sloane for the \textit{Notices} \cite{notices}. Both sequences result in very interesting graphs (see Fig. \ref{fig:erd}).

\begin{center}
\begin{figure}[h!]
\begin{tikzpicture}
\node at (0,0) {\includegraphics[width=0.45\textwidth]{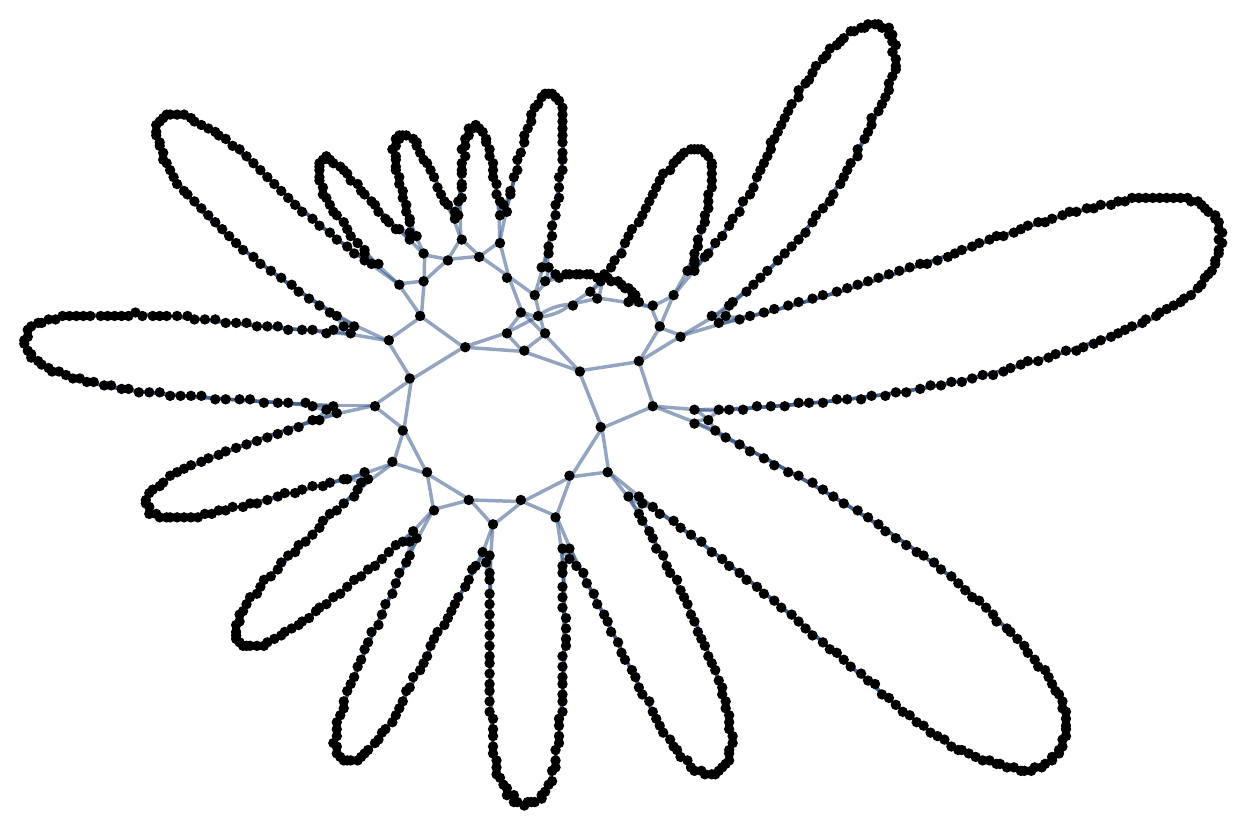}};
\node at (6,0) {\includegraphics[width=0.4\textwidth]{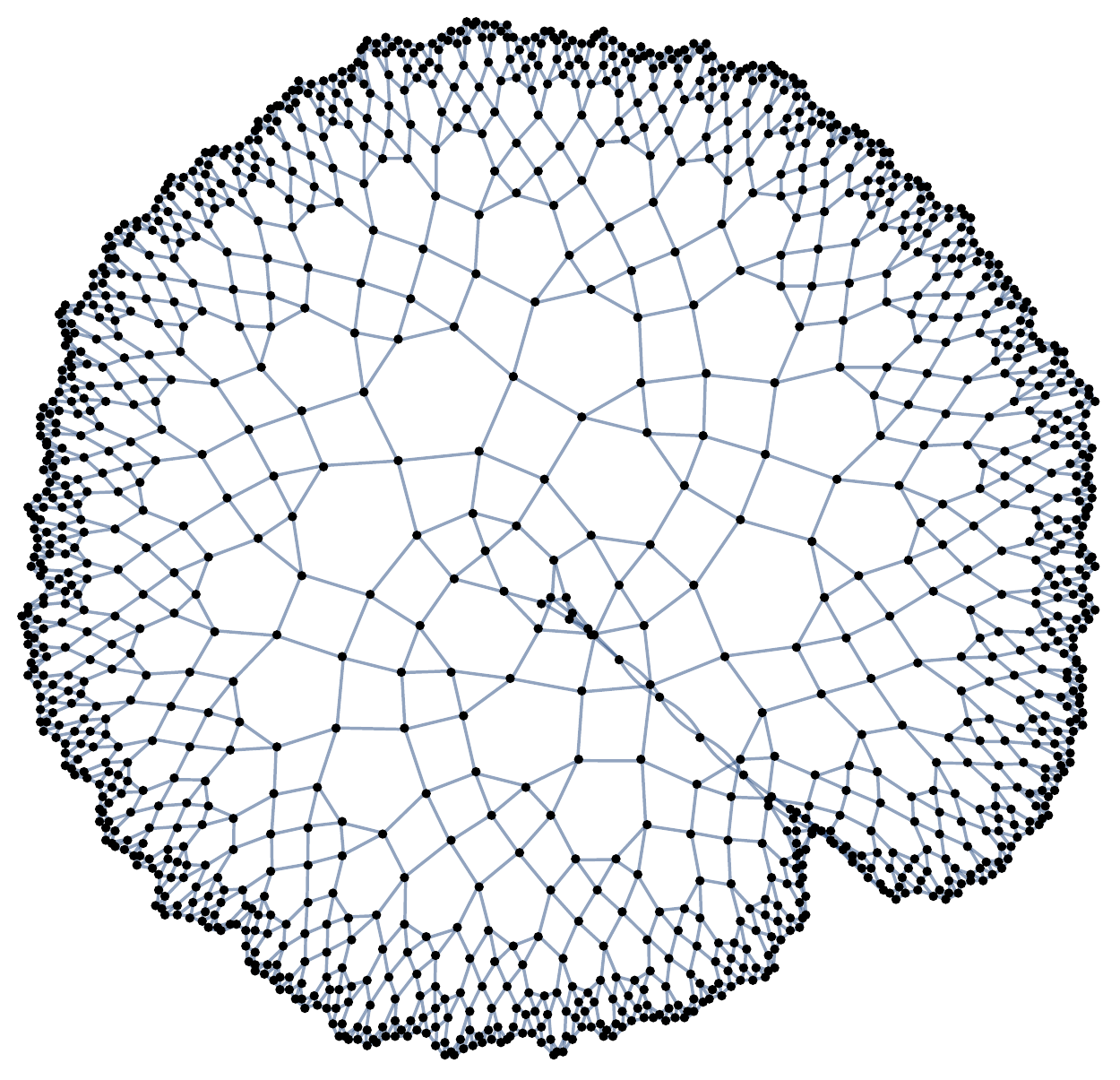}};
\end{tikzpicture}
\caption{Graphs for A064736 (left) and A036552 (right).}
\label{fig:erd}
\end{figure}
\end{center}

\subsection{The Reversal Sequence with duplicates removed} This sequence is derived from the reversal sequence A004086. The reversal sequence has a very simple definition: for the \textit{n-th} element $a_n$, simply read the digits of the integer backwards (in base 10). The sequence starts $0,1, \dots, 9, 1, 21, 31, 41, \dots$. We remove all duplicates of the sequence and thus arrive at A076641 in the OEIS:
$$ 0, 1, 2, 3, 4, 5, 6, 7, 8, 9, 11, 21, 31, 41, 51, 61, 71, 81, \dots$$
The arising graph structure is quite curious (see Fig. \ref{fig:rev}). Given the fairly explicit definition of the sequence, one might hope that this phenomenon can be analyzed.

\begin{center}
\begin{figure}[h!]
\begin{tikzpicture}
\node at (0,0) {\includegraphics[width=0.4\textwidth]{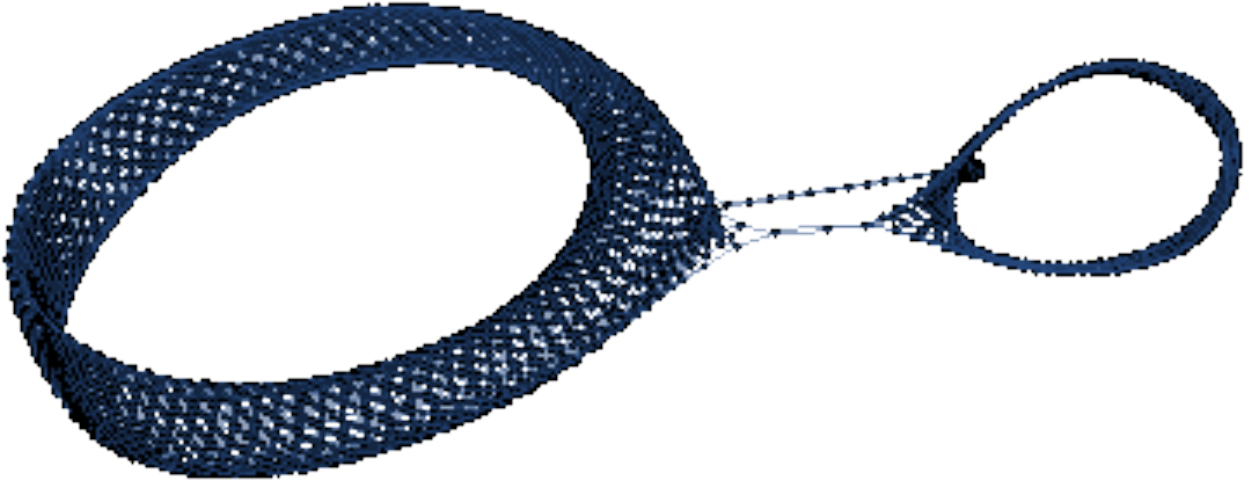}};
\node at (6,0) {\includegraphics[width=0.4\textwidth]{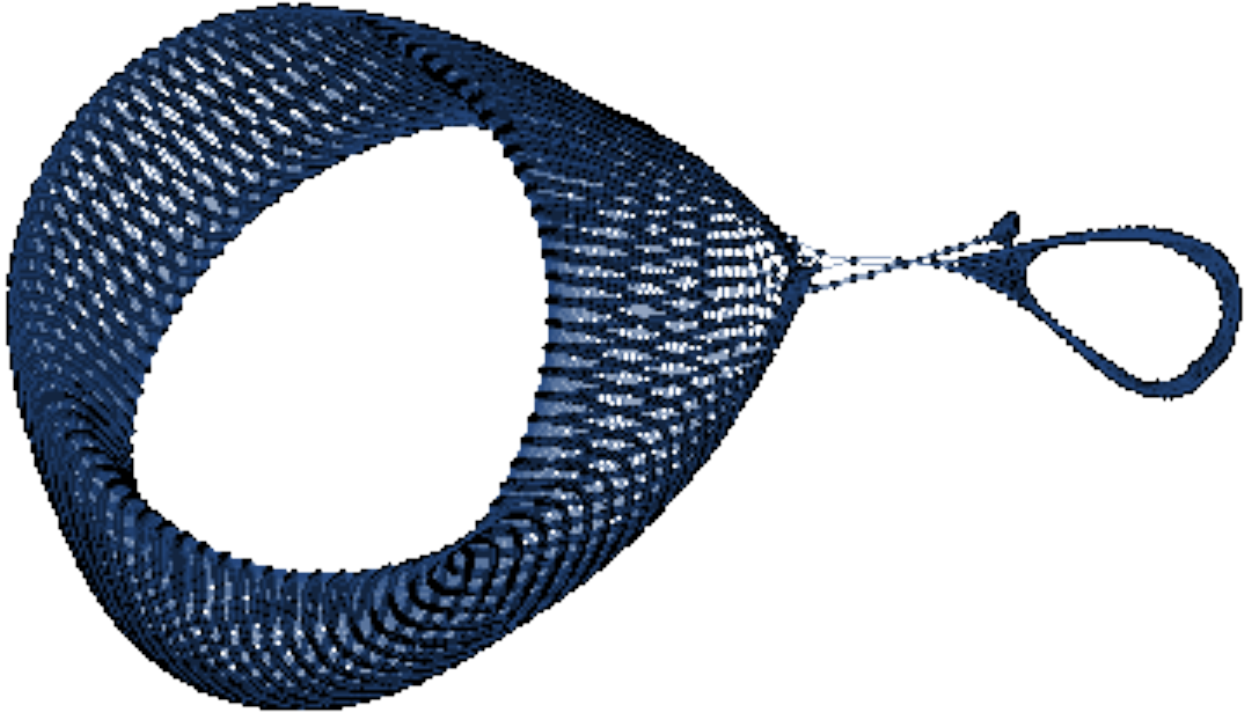}};
\end{tikzpicture}
\caption{Graphs arising from the reversal sequence for the first 5000 elements (left) and the first 10000 elements (right).}
\label{fig:rev}
\end{figure}
\end{center}

\subsection{A Sign Flipping Sequence} In this section we discuss a type of sequence indexed by base $b \in \mathbb{N}$, $b\geq 2$. The definition is quite simple: in base $b$, the \textit{n-th} element is determined by the expansion of the integer $n$ in base $b$: if
$$ n = \sum_{k=0}^{\infty} d_k b^k, \quad \mbox{then} \quad a_n =  \sum_{k=0}^{\infty} d_k (-b)^k.$$
Illustrating it with an explicit example, in base 2 we have 
$$ 9 = 1001_2 \qquad \mbox{and thus} \qquad a_9 = -2^3 + 2^0 = -7.$$
In particular, the sequence (in base 2, A053985) begins 
$$ 0, 1, -2, -1, 4, 5, 2, 3, -8, -7, -10, -9, -4, -3 \dots$$
These sequences exist in the OEIS, at least for base $b=3$ (A065369), $4 \leq b \leq 9$ (A073791--A073796) and base $b=10$ (A073835). However, it does not seem as if this sequences have been studied very much -- we show that they exhibit fairly intricate structures.
We note that these sequences are injective, it is not possible for the same element to appear more than once (something that is required for our graph construction: the numbers have to be distinct).
\begin{lemma} For any $b \geq 2$, the associated sequence is injective.
\end{lemma}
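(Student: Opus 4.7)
The plan is to prove the stronger statement that the representation $m = \sum_{k=0}^{N} d_k(-b)^k$ with digits $0 \le d_k \le b-1$ is unique, which is just the classical fact that every integer has a unique base-$(-b)$ expansion with digit set $\{0,1,\dots,b-1\}$. This is precisely what injectivity of $n \mapsto a_n$ requires: the base-$b$ digits $d_k$ of $n$ are already unique by ordinary base-$b$ representation, so the map fails to be injective if and only if two different digit strings produce the same integer when evaluated at $-b$.

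So first I would suppose $a_n = a_m$ with $n = \sum d_k b^k$ and $m = \sum e_k b^k$, padding with zeros so both expansions have the same length, and rearrange to get
\[
\sum_{k=0}^{N} (d_k - e_k)(-b)^k \;=\; 0,
\]
with $|d_k - e_k| \le b-1$ for every $k$. Reducing modulo $b$ kills every term with $k \ge 1$, leaving $d_0 - e_0 \equiv 0 \pmod{b}$; since $|d_0-e_0| < b$, this forces $d_0 = e_0$. Subtracting the $k=0$ term and dividing the resulting equation by $-b$ yields the same situation with one fewer digit. A straightforward induction on $N$ (or repeating the mod-$b$ step) then gives $d_k = e_k$ for all $k$, so $n = m$.

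The main (and only) place one has to be careful is the mod-$b$ reduction: it works because $(-b)^k \equiv 0 \pmod{b}$ for $k \ge 1$, and because the digit-wise bound $|d_k - e_k| \le b-1 < b$ is tight enough to turn the congruence into equality. There is no genuine obstacle here; the result is essentially the uniqueness of negabase expansions, and the argument is a two-line descent. The lemma could equivalently be phrased and proved by showing that $n \mapsto a_n$ is in fact a bijection $\mathbb{N}_0 \to \mathbb{Z}$, but for the purposes of the graph construction only injectivity is needed.
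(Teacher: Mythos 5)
Your argument is correct and is essentially the same as the paper's second proof (``Yet another proof of Lemma 1''): reduce modulo $b$ to get $d_0=e_0$, then divide by $-b$ and induct on the digit index. The paper's first proof instead uses a size comparison between even-indexed and odd-indexed digit sums, but your mod-$b$ descent matches the alternative proof already given there.
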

\begin{proof}
Suppose that $a_{n_1} = a_{n_2}$. We expand the integers into base $b$ and obtain
$$ n_1 = \sum_{k=0}^{\infty} d_k b^k \qquad \mbox{and} \qquad  n_2 = \sum_{k=0}^{\infty} e_k b^k.$$
By assumption, we have
$$ \sum_{k=0}^{\infty} d_k (-b)^k  = \sum_{k=0}^{\infty} e_k (-b)^k.$$
By separating into even and odd powers, we have
$$ \sum_{k=0}^{\infty} (d_{2k} - e_{2k}) b^{2k} =  \sum_{k=0}^{\infty} (d_{2k+1} - e_{2k+1}) b^{2k+1}.$$

\begin{center}
\begin{figure}[h!]
\begin{tikzpicture}
\node at (0,0) {\includegraphics[width=0.4\textwidth]{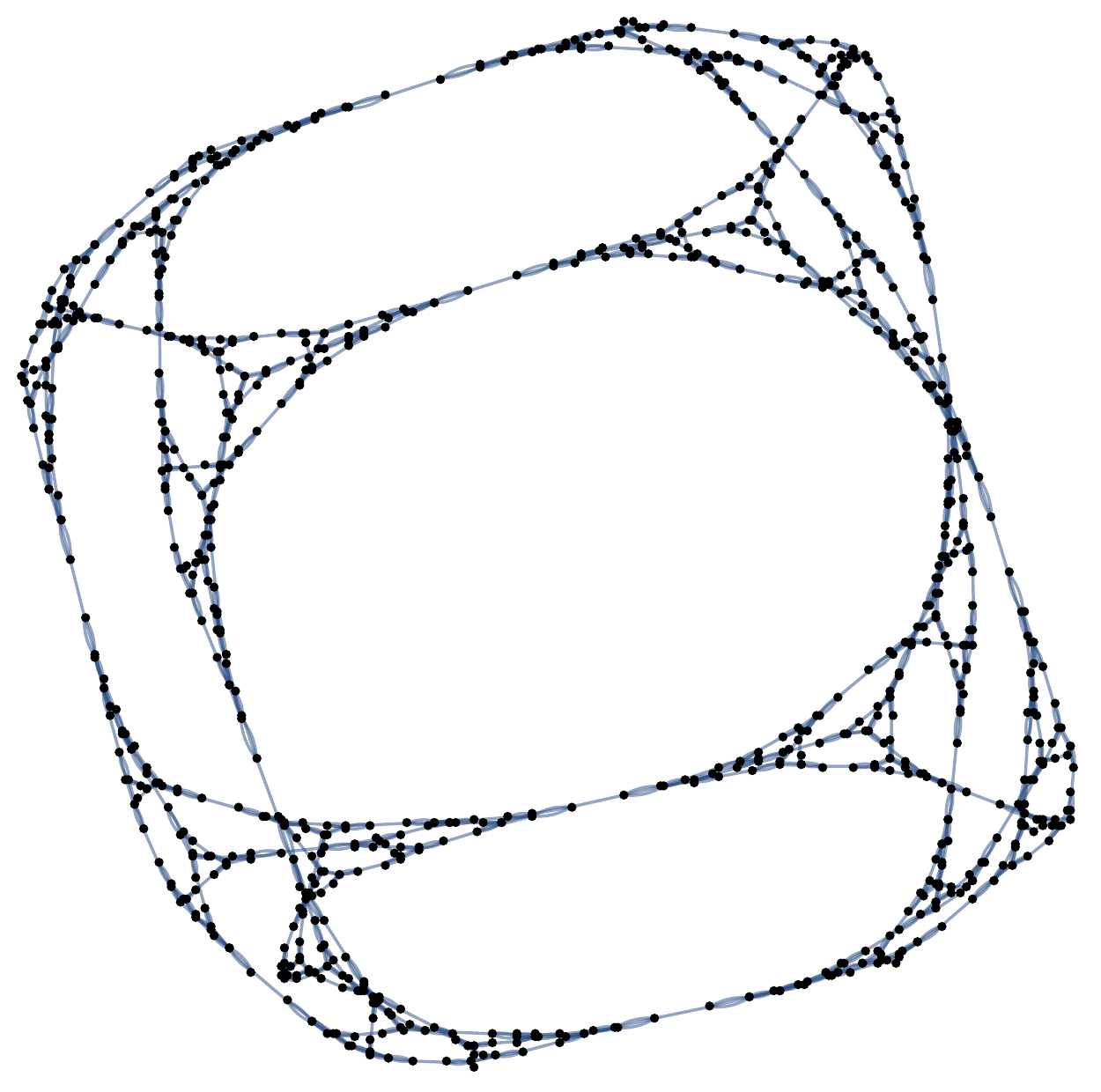}};
\node at (6,0) {\includegraphics[width=0.4\textwidth]{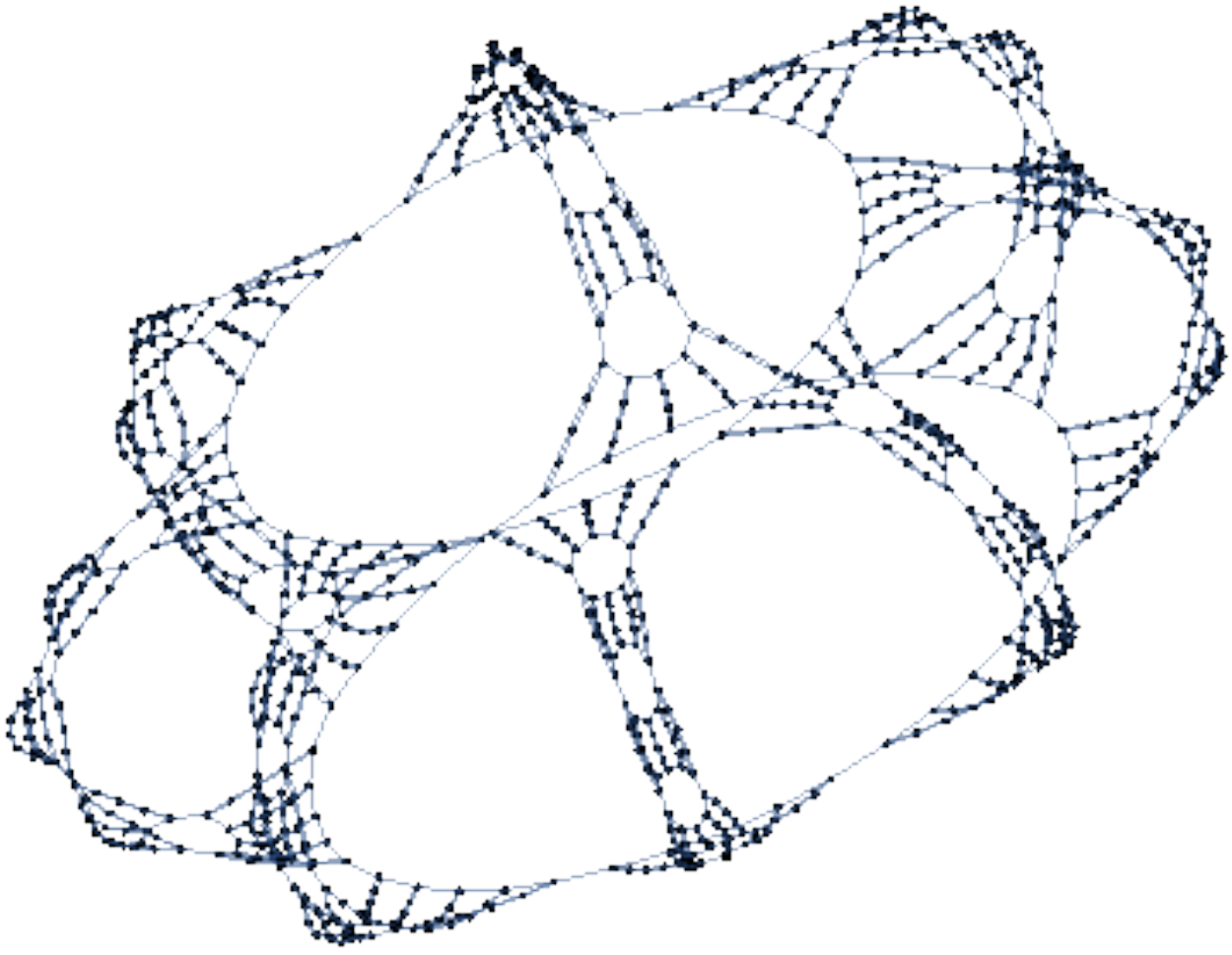}};
\end{tikzpicture}
\caption{Graphs for the first few elements in base 2 (left) and 5 (right).}
\end{figure}
\end{center} 

Since $n_1 \neq n_2$, the $d_i$ and $e_i$ have to differ in at least one place. Let us first assume that the largest index at which they differ is $2\ell$. Then
 $$ \left| \sum_{k=0}^{\infty} (d_{2k} - e_{2k}) b^{2k} \right| \geq b^{2\ell} - \sum_{k=0}^{\ell-1} (b-1) b^{2k} = b^{2\ell} - \frac{b^{2\ell} - b^2}{b+1} \geq \frac{b}{b+1} b^{2\ell}.$$
 At the same time, the right-hand side is at most
 $$  \left| \sum_{k=0}^{\infty} (d_{2k+1} - e_{2k+1}) b^{2k+1} \right| \leq  \left| \sum_{k=0}^{\ell-1} (b-1) b^{2k+1} \right| = \frac{b}{b+1} b^{2\ell} - \frac{b}{b+1}.$$

 This shows that these two quantities can never coincide. The first significant difference in a digit has thus to occur at index $2\ell+1$. By the same type of argument, we have
 $$  \left| \sum_{k=0}^{\ell} (d_{2k+1} - e_{2k+1}) b^{2k+1} \right| \geq b^{2\ell+1} -  \sum_{k=0}^{\ell-1} (b-1) b^{2k+1} \geq \frac{b}{b+1} b^{2\ell+1}$$
 while
  $$ \left| \sum_{k=0}^{\ell} (d_{2k} - e_{2k}) b^{2k} \right| \leq   \left| \sum_{k=0}^{\ell} (b-1) b^{2k} \right| = \frac{b}{b+1}b^{2\ell+1} - \frac{1}{b+1}.$$
  This shows the injectivity of the sequence in every base.
\end{proof}
\begin{proof}[Yet another proof of Lemma 1]
Let $b\ge 2$ be a fixed base. Note that there is a unique polynomial $p_n$ with coefficients in $\{0, \ldots, b-1\}$ such that $n=p_n(b).$ With this notation, it is easy to see that $a_n=p_n(-b).$ Suppose $a_{n}=a_{m},$ and let $p_n(x)=\sum_{i=0}^{k_n}d_ix^i$ and $p_m=\sum_{i=0}^{k_m}e_ix^i.$ Note that $a_n=a_m\iff p_n(-b)=p_m(-b),$ and evaluating both sides modulo $b,$ we obtain $d_0=e_0.$ Given $d_i=e_i, 0\le i\le j-1,$ it follows that \[(-b)^{-j}\left(p_n(-b)-\sum_{i=0}^{j-1}d_i(-b)^i\right)=(-b)^{-j}\left(p_m(-b)-\sum_{i=0}^{j-1}e_i(-b)^i\right).\]
Evaluating both sides of the above expression, we obtain $d_j=e_j.$ It follows, inductively, that $p_n=p_m.$
\end{proof}

  \begin{center}
\begin{figure}[h!]
\begin{tikzpicture}
\node at (0,0) {\includegraphics[width=0.4\textwidth]{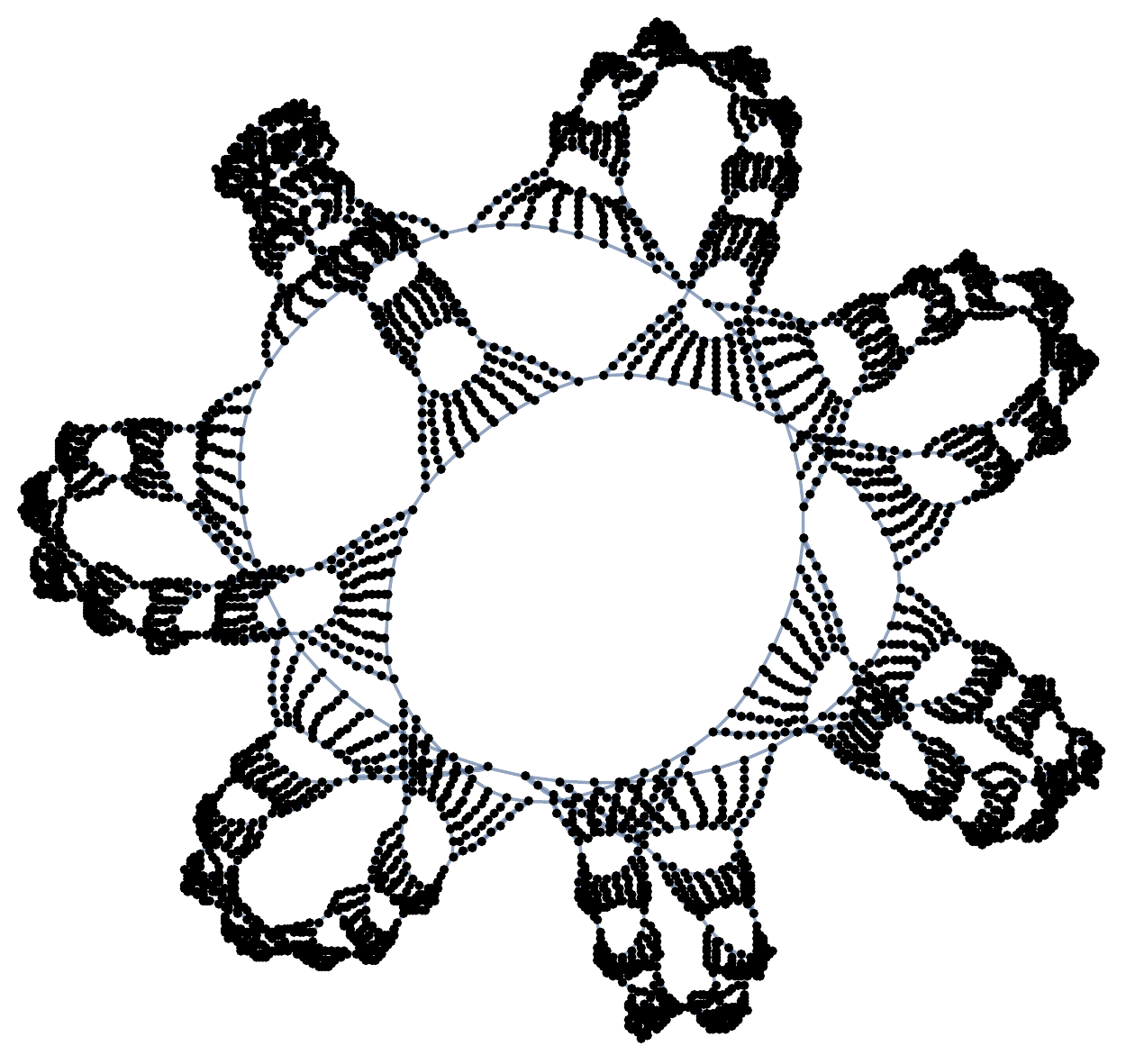}};
\node at (6,0) {\includegraphics[width=0.45\textwidth]{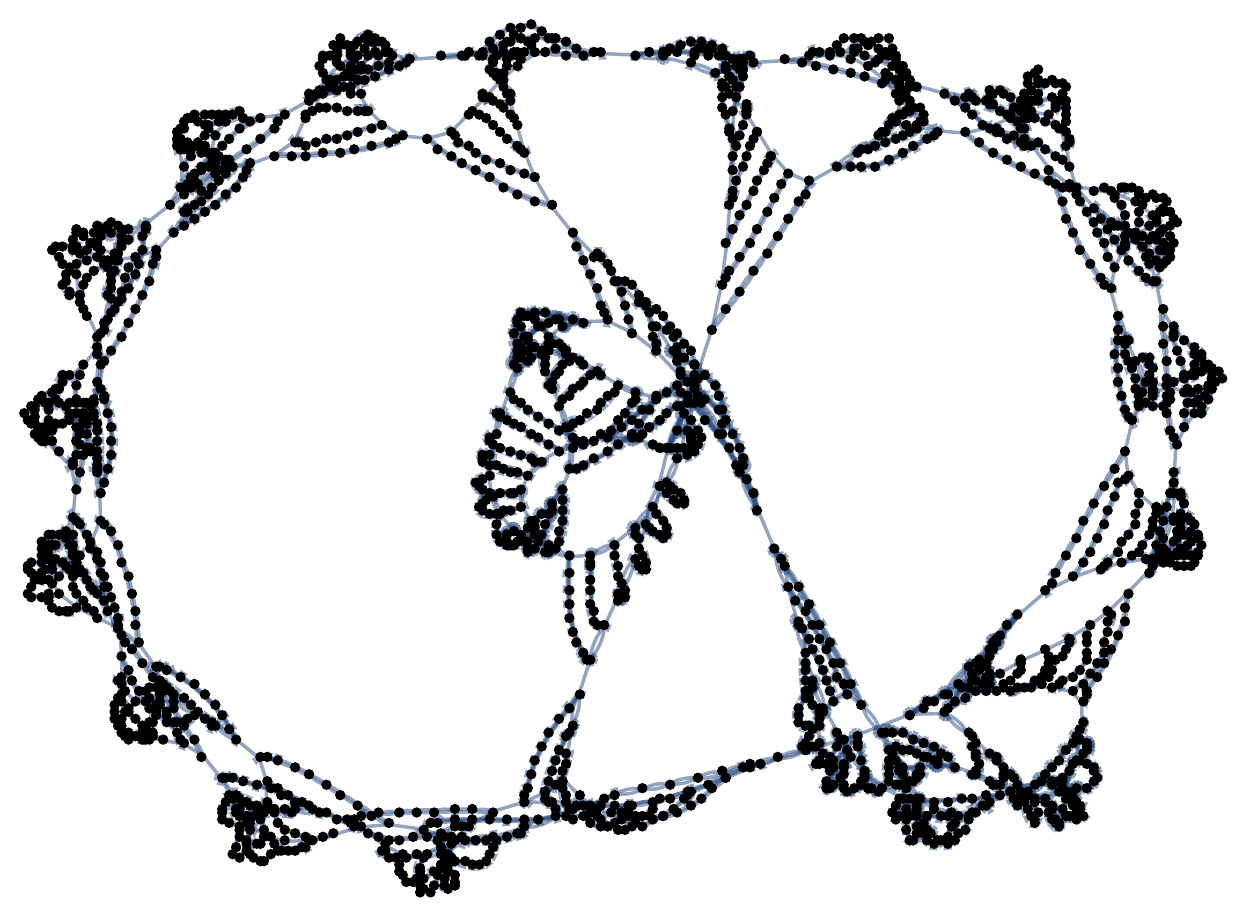}};
\end{tikzpicture}
\caption{Graphs for the first few elements in base 9 (left) and 13 (right).}
\end{figure}
\end{center}

\begin{lemma} Let $b\ge 2$ be a fixed base and let $a_n$ be the sequence as defined in \S 2.5 with respect to a basis $b$. Then, $a_n$ satisfies the recurrence relation \[a_{bk+m}=(-b)a_k+m.\]
\end{lemma}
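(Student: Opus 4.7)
The plan is to unpack the definition of $a_n$ in terms of base-$b$ expansions, observing that multiplying $k$ by $b$ and adding $m \in \{0, 1, \dots, b-1\}$ is exactly a shift of the base-$b$ digits of $k$ with $m$ prepended in the units slot.

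Concretely, I would start by writing Euclidean division: given integers $k \ge 0$ and $m$ with $0 \le m < b$, the integer $n = bk + m$ has a specific base-$b$ expansion obtainable from that of $k$. Namely, if $k$ has the unique expansion $k = \sum_{i \ge 0} c_i b^i$ with $c_i \in \{0, \ldots, b-1\}$, then
\[
n = bk + m = m + \sum_{i \ge 0} c_i b^{i+1},
\]
so the digits of $n$ are $d_0 = m$ and $d_{i+1} = c_i$ for $i \ge 0$. Uniqueness of base-$b$ representations (which is used implicitly throughout the previous lemma) guarantees this is the correct expansion because $0 \le m < b$ and each $c_i \in \{0, \ldots, b-1\}$.

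Next I would apply the definition of $a_n$ to this expansion:
\[
a_n \;=\; \sum_{j \ge 0} d_j (-b)^j \;=\; m + \sum_{i \ge 0} c_i (-b)^{i+1} \;=\; m + (-b)\sum_{i \ge 0} c_i (-b)^i.
\]
Recognizing the remaining sum as $a_k$ (by the same definition applied to $k$) yields $a_{bk+m} = (-b) a_k + m$, which is the desired recurrence.

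There is no real obstacle here: the argument is essentially just bookkeeping about what a digit shift does, and the hypothesis $0 \le m < b$ (implicit in the statement) is exactly what makes the shifted sequence of digits a legitimate base-$b$ expansion of $n$. The only thing one needs to be a little careful about is not conflating the two summations; writing out the index change $j = i+1$ explicitly, as above, avoids any confusion.
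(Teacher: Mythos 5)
Your proposal is correct and is essentially the same argument as the paper's: both proofs reduce the statement to the observation that the base-$b$ digits of $n=bk+m$ are those of $k$ shifted up one place with $m$ in the units position, after which factoring out $(-b)$ from the definition of $a_n$ gives the recurrence. The only cosmetic difference is direction (you build the expansion of $n$ from that of $k$, the paper strips the last digit of $n$ to recover $k$).
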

\begin{proof}
Let $n\in \mathbb{N}.$ We can write $n=\sum_{i=0}^{N}x_ib^{i}=bk+x_0$ where $k=\sum_{i=1}^{N-1}x_ib^{i-1}.$ From the definition of the sequence we have 
\begin{align*}
a_n &= (-b)\sum_{i=1}^{N-1}x_i(-b)^{i-1}+x_0\\
&=(-b)a_k+x_0.
\end{align*}
This is the desired statement.
\end{proof}

One would perhaps assume that these intricate graphs are the consequence of iteratively applying Lemma 2 in some fashion. It is not clear to us at this point and further investigating these graphs seems like an interesting avenue.

\subsection{A GCD sequence} This sequence is derived from A133058: we consider A133058 and remove all duplicates resulting in the sequence A339571. A133058 (proposed by C. Zizka) has an iterative definition: $a_0 = a_1 = 1$ and, for $n \geq 2$, 
$$ a_{n} = \begin{cases} a_{n-1} + n + 1 \qquad &\mbox{if}~\gcd(a_{n-1},n) = 1\\
\frac{a_{n-1}}{\gcd(a_{n-1}, n)} \qquad &\mbox{otherwise.} \end{cases}$$
We learned about the sequence from a numberphile video \cite{number2}. 

  \begin{center}
\begin{figure}[h!]
\begin{tikzpicture}
\node at (0,0) {\includegraphics[width=0.4\textwidth]{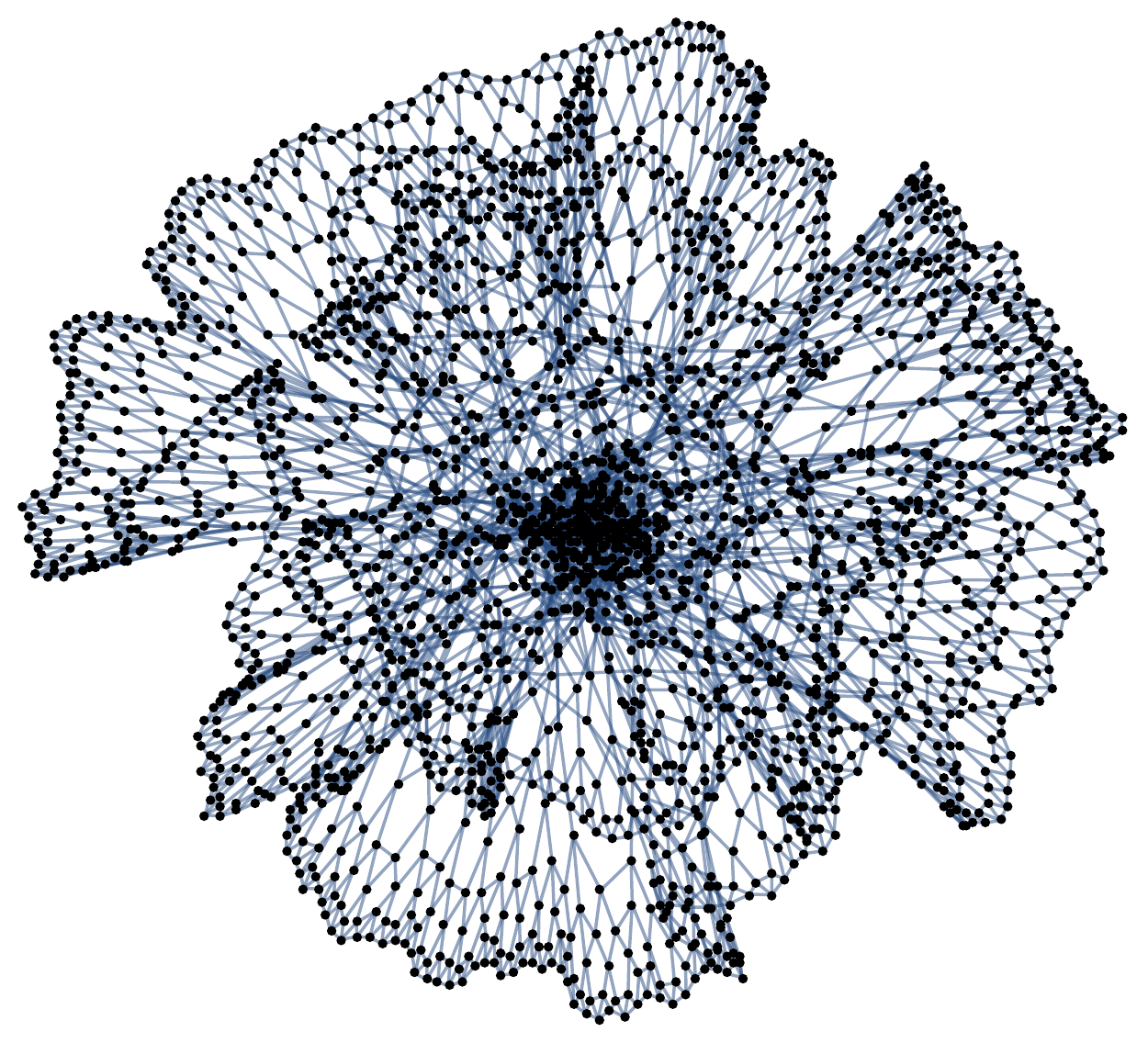}};
\node at (6,0) {\includegraphics[width=0.4\textwidth]{butterfly.pdf}};
\end{tikzpicture}
\caption{A133058 with duplicates removed.}
\end{figure}
\end{center}

\subsection{The EKG sequence} \label{sec:EKG} The EKG sequence, first introduced by Jonathan Ayres (A064413, \cite{ekg}), is a famously tricky sequence. Its definition is simple: $a_1 = 1$, $a_2 = 2$ and for each $n \geq 3$, $a_n$ is the smallest natural number that has not yet appeared with the property that
$$ \gcd(a_n, a_{n-1}) \geq 2.$$
The sequence begins $1,2,4,6,3,9,12,8, \dots$.  The sequence is known to be a permutation of the integers and has linear growth $c n \leq a_n \leq Cn$ (see \cite{ekg}). At first glance, the graph seems rather random (see Fig. \ref{fig:ekg}) but it does reveal itself in having an eigenvalue suspiciously close to 4 (for the first 5000 terms, the eigenvalue is 3.96). Embedding the graph into $\mathbb{R}^3$, we observe a type of parabola or cone structure. It is not clear to us what will happen as the graphs get larger and larger.
\begin{center}
\begin{figure}[h!]
\begin{tikzpicture}
\node at (0,0) {\includegraphics[width=0.4\textwidth]{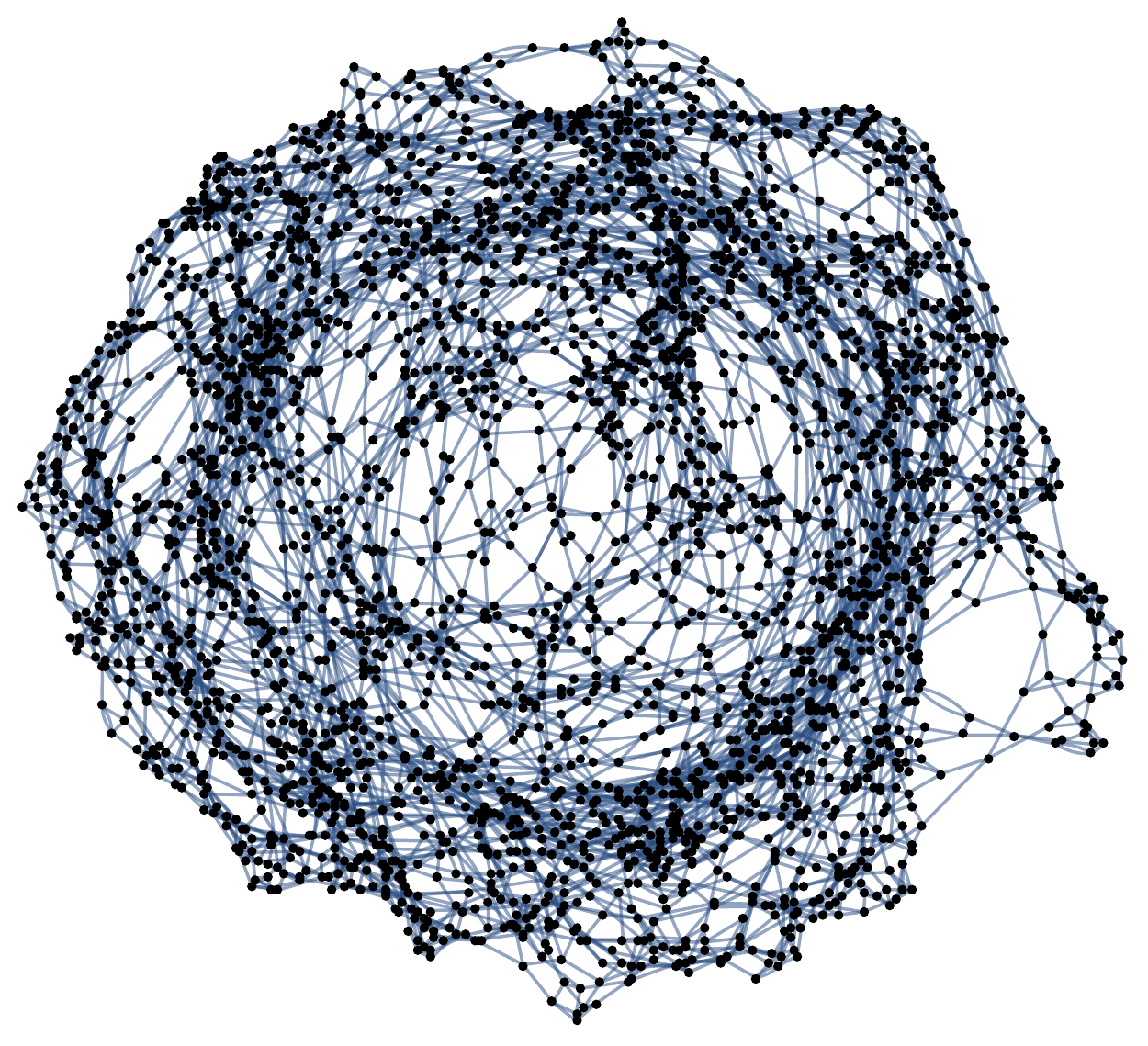}};
\node at (5,0) {\includegraphics[width=0.4\textwidth]{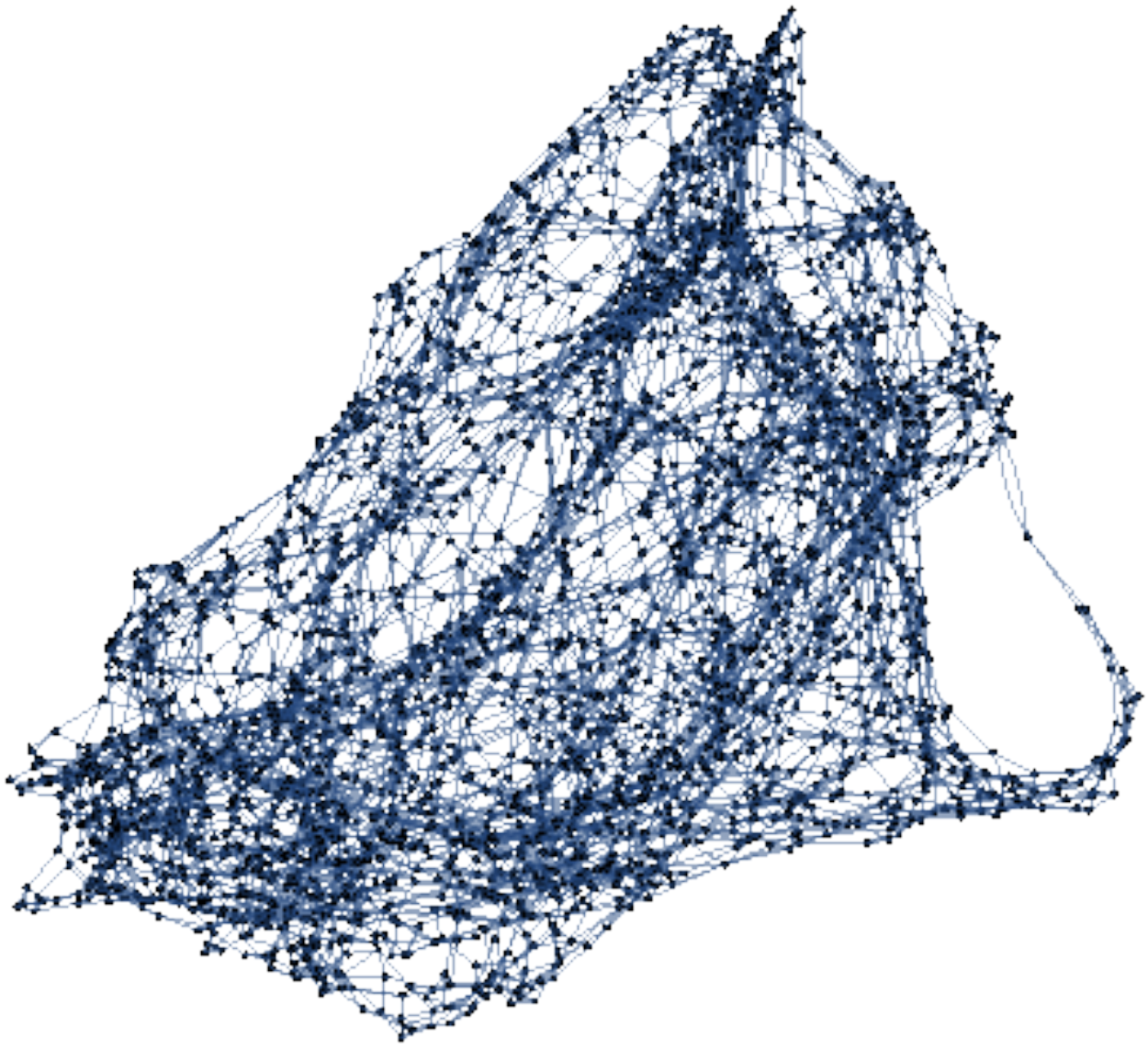}};
\end{tikzpicture}
\caption{The Graph associated to the first few elements of the EKG sequence, an embedding in two dimensions (left) and an embedding in three dimensions (right).}
\label{fig:ekg}
\end{figure}
\end{center}

\subsection{Totally Balanced Integers} This example is a subsequence of the van der Corput sequence (in base 2). A014486 is the sequence of totally balanced sequence of integers whose binary expansion has an even length, there are equally many 0's and 1's and, starting from the most significant bit, the number of 0's never exceeds the number of 1's. The first few elements, written in binary, are 0, 10, 1010, 1100, 101010, ... which, in base 10, becomes the sequence
$$ 0, 2, 10, 12, 42, 44, 50, 52, 56, 170, \dots$$
If we denote the van der Corput sequence (in base 2) by $v_n$ and the sequence of totally balanced sequences of integers by $A014486_n$, then our example is given by
$ a_n = v_{A014486_n}.$
The sequence begins
$$ \frac{1}{4}, \frac{5}{16}, \frac{3}{16}, \frac{21}{64}, \frac{13}{64}, \frac{19}{64}, \frac{11}{64}, \frac{7}{64}, \dots$$
Its numerators are A072800 in the OEIS, the sequence of denominators are A339570.
We find that the associated graph has a rather striking appearance indicating some highly nontrivial dynamics in the integer sequence. It seems like an interesting avenue to understand when suitable subsequences of a structured sequences inherit some of the regularity in the structure? 
It was observed by Neil Sloane (personal communication) and verified by Hugo Pf\"ortner for the first million terms that the denominators seem to powers of $4$ and that the number of times $4^k$ appears is given by the $k-$th Catalan number. This is indeed the case.

\begin{proposition} The sequence of denominators of the sequence defined in \S 2.8 is given by powers of $4$. $4^k$ appears 
$$ C_k = \frac{1}{2k+1} \binom{2k}{k}$$
many times.
\end{proposition}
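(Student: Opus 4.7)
The plan is to reduce the proposition to two ingredients: the classical identification of A014486 with Dyck words, and a direct computation of the denominator of $v_n$ from the binary expansion of $n$.

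First, I would recall the standard identification of A014486 with Dyck words. Mapping each binary digit $1 \mapsto +1$ and $0 \mapsto -1$, the defining condition of A014486 (even length $2k$, equally many $0$'s and $1$'s, and prefix counts of $0$'s never exceeding those of $1$'s) is precisely the Dyck path condition of semilength $k$. Consequently the number of elements of A014486 whose binary length equals $2k$ is $C_k$.

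Second, for such an $n$ I would write its binary digits as $d_{2k-1}\cdots d_1 d_0$ with $d_{2k-1}=1$ (since $n>0$) and $d_0=0$ (since a Dyck word ends in a down step). The van der Corput map gives
\[
v_n \;=\; \sum_{i=0}^{2k-1} d_i\, 2^{-(i+1)} \;=\; \frac{\mathrm{rev}(n)}{2^{2k}}, \qquad \mathrm{rev}(n) \;:=\; \sum_{i=0}^{2k-1} d_i\, 2^{\,2k-1-i}.
\]
The key observation is that the least-significant bit of $\mathrm{rev}(n)$ equals $d_{2k-1}=1$, so $\mathrm{rev}(n)$ is odd and the fraction is already in lowest terms with denominator $2^{2k}=4^k$. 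Combined with the first step, this shows that $4^k$ occurs as a denominator exactly $C_k$ times and that no other values appear.

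Third, there is no substantive obstacle here; the argument is essentially bookkeeping around the bijection with Dyck words and the binary expansion. The only mild subtlety is the edge case $n=0$, which corresponds to $k=0$ and the empty Dyck word and matches $C_0=1$; if the sequence in \S 2.8 is indexed so that it begins at $1/4$, one simply omits this term from the count. A small sanity check against the displayed initial terms ($1/4$; $5/16,3/16$; $21/64,13/64,19/64,11/64,7/64$) gives multiplicities $1,2,5 = C_1,C_2,C_3$, confirming the claim.
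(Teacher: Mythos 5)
Your argument is correct and is essentially the paper's proof, just spelled out in more detail: both rest on the two facts that reversing a totally balanced integer yields an odd number (so the fraction $\mathrm{rev}(n)/4^k$ is already in lowest terms) and that the totally balanced integers with $2k$ binary digits are counted by the Catalan number $C_k$ via the Dyck-word bijection. Your explicit bookkeeping of the binary expansion and the $n=0$ edge case only fills in details the paper leaves implicit.
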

\begin{proof} Reversing the digit expansion of a totally balanced integer always results in an odd numbers. Moreover, the set of totally balanced integers with $2n$ digits has $C_n$ elements: since the fractions are primitive, the result follows. 
\end{proof}

\begin{center}
\begin{figure}[h!]
\begin{tikzpicture}
\node at (5,0) {\includegraphics[width=0.4\textwidth]{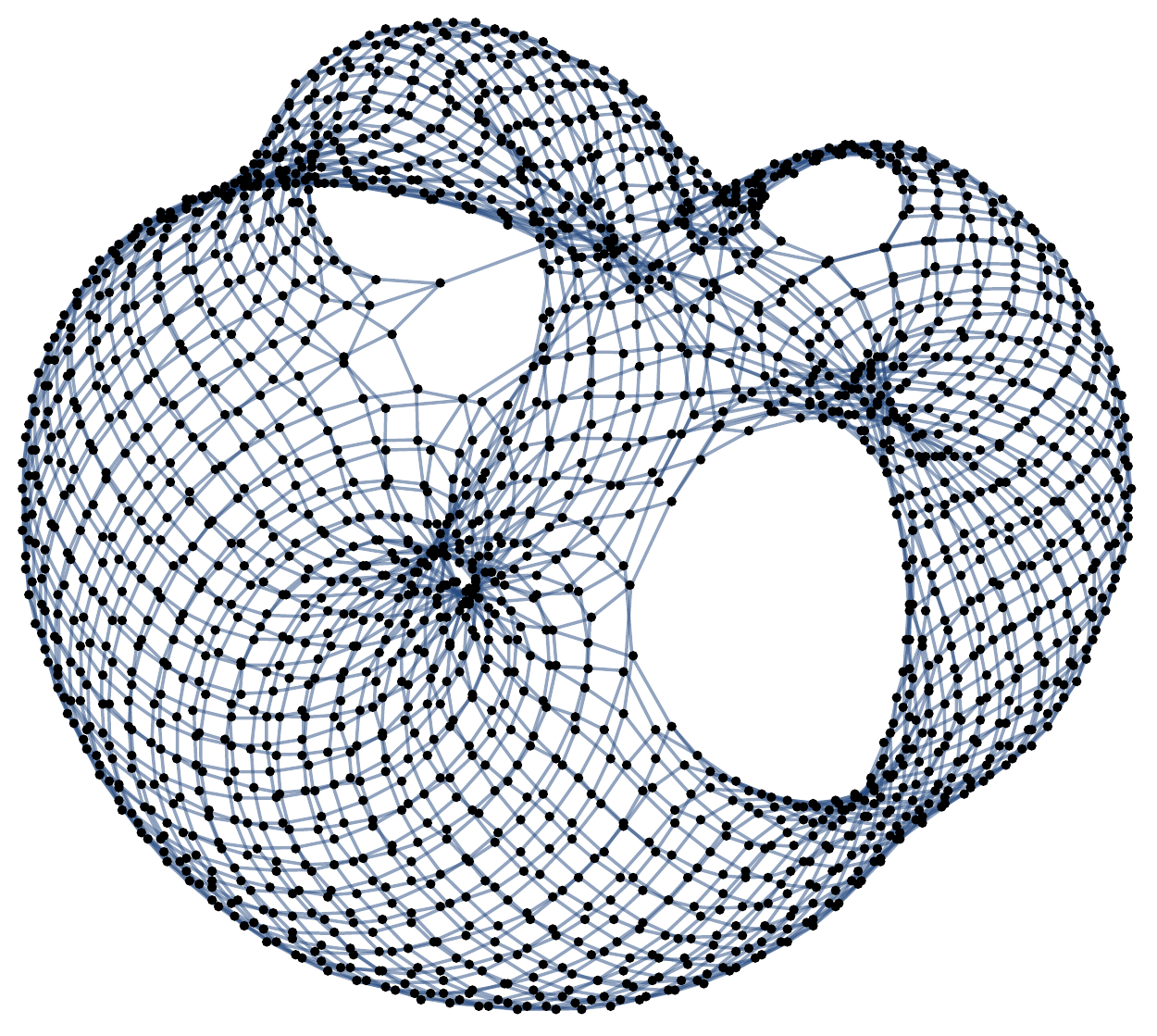}};
\end{tikzpicture}
\caption{Graph associated to the van der Corput sequence along the subsequence of totally balanced integers (A014486).}
\label{fig:sub}
\end{figure}
\end{center}

\subsection{The Recaman sequence} The Recaman sequence (A005132), first defined by Bernardo Recaman Santos in a letter to N. J. A. Sloane in 1991, is a well-known sequence (being featured in both a numberphile video \cite{bellos} as well as in Sloane's `Favorite Sequences' \cite{sloane}). It is given by $a_0 = 0$ and
$$ a_n = \begin{cases} a_{n-1} -n \qquad &\mbox{if}~a_{n-1} > n~\mbox{and not already in the sequence}\\ 
a_{n-1} + n \qquad &\mbox{otherwise.} \end{cases}$$
The sequence begins $0, 1, 3, 6, 2, 7, 13, 20, 12, 21, \dots$. It is not a permutation of the integers since $a_{20} = 42 = a_{24}$. Sloane conjectured that each number will eventually appear in the sequence but this is not known. Graphing the sequences after removing all duplicates results in a quite interesting graph. It is difficult to make a very precise statement but the graph immediately shows the presence of extraordinary structure.

\begin{center}
\begin{figure}[h!]
\begin{tikzpicture}
\node at (0,0) {\includegraphics[width=0.5\textwidth]{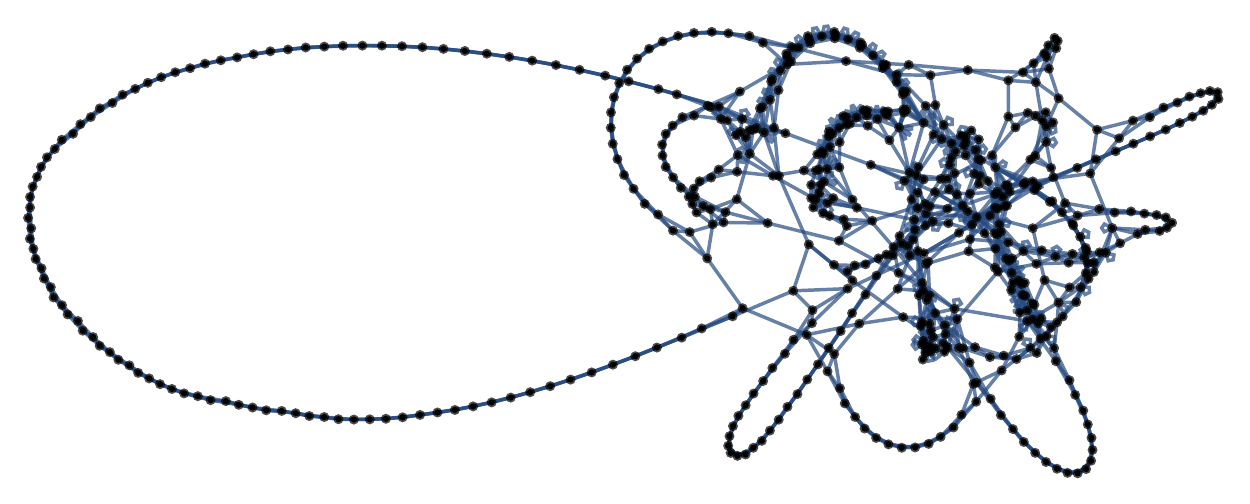}};
\end{tikzpicture}
\caption{The Recaman sequence (A005132) with duplicates removed.}
\end{figure}
\end{center}

\subsection{A Sequence of Leroy Quet} We learned about this sequence (A127202) from a survey article of Sloane \cite{notices}. It is given by $a_1 =1, a_2 = 2$ and then by
picking $a_n$ as the smallest positive integer that has not yet occurred in the sequence satisfying the property that $\gcd(a_n, a_{n-1}) \neq \gcd(a_{n-1}, a_{n-2})$. The sequence begins
$$ 1, 2, 4, 3, 6, 5, 10, 7, 14, 8, 9, 12, 11, 22, \dots$$
Quet conjectured that this sequence is a permutation of the integers, a statement proven by Sloane.  We obtain a graph that clearly exhibits some nontrivial structure -- it is not clear to us why or whether it will continue in this fashion (see Fig. \ref{fig:quet}).

\subsection{Zabolotskiy's sequence} Zabolotskiy's sequence (A281488) is given by a simple rule: $a_1 = 1, a_2 = -1$ and, for $n \geq 3$,
$$ a_n  = - \sum_{1 \leq d \leq n-2 \atop d| n-2 } a_d.$$
After removing the duplicates, we arrive at a very strange type of graph: it clearly exhibits a degree of regularity but it is very difficult to say why this would be the case or whether it will continue to exhibit such structure once the graphs get larger, see Fig. \ref{fig:quet}.

\begin{center}
\begin{figure}[h!]
\begin{tikzpicture}
\node at (0,0) {\includegraphics[width=0.45\textwidth]{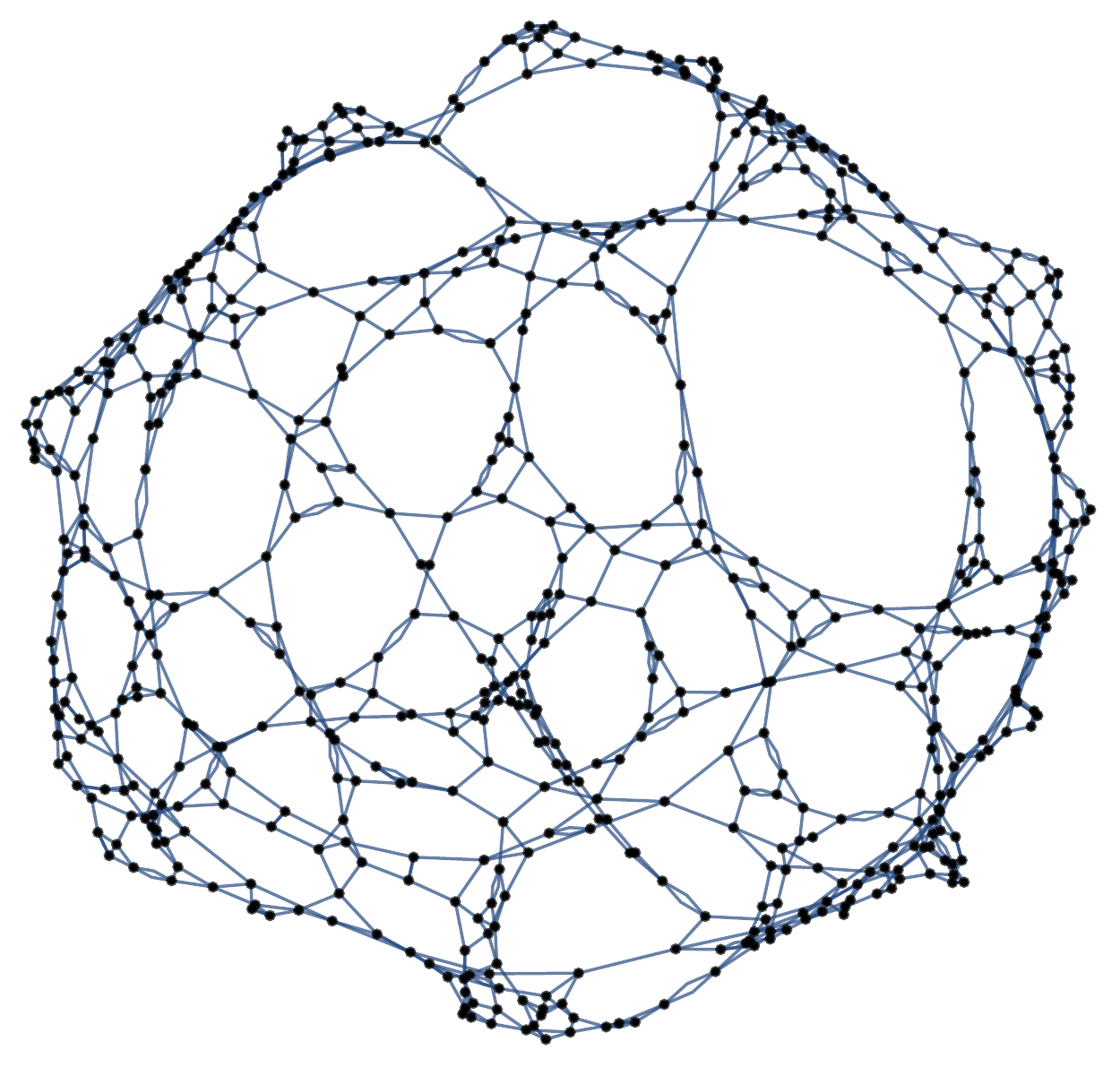}};
\node at (6,0) {\includegraphics[width=0.45\textwidth]{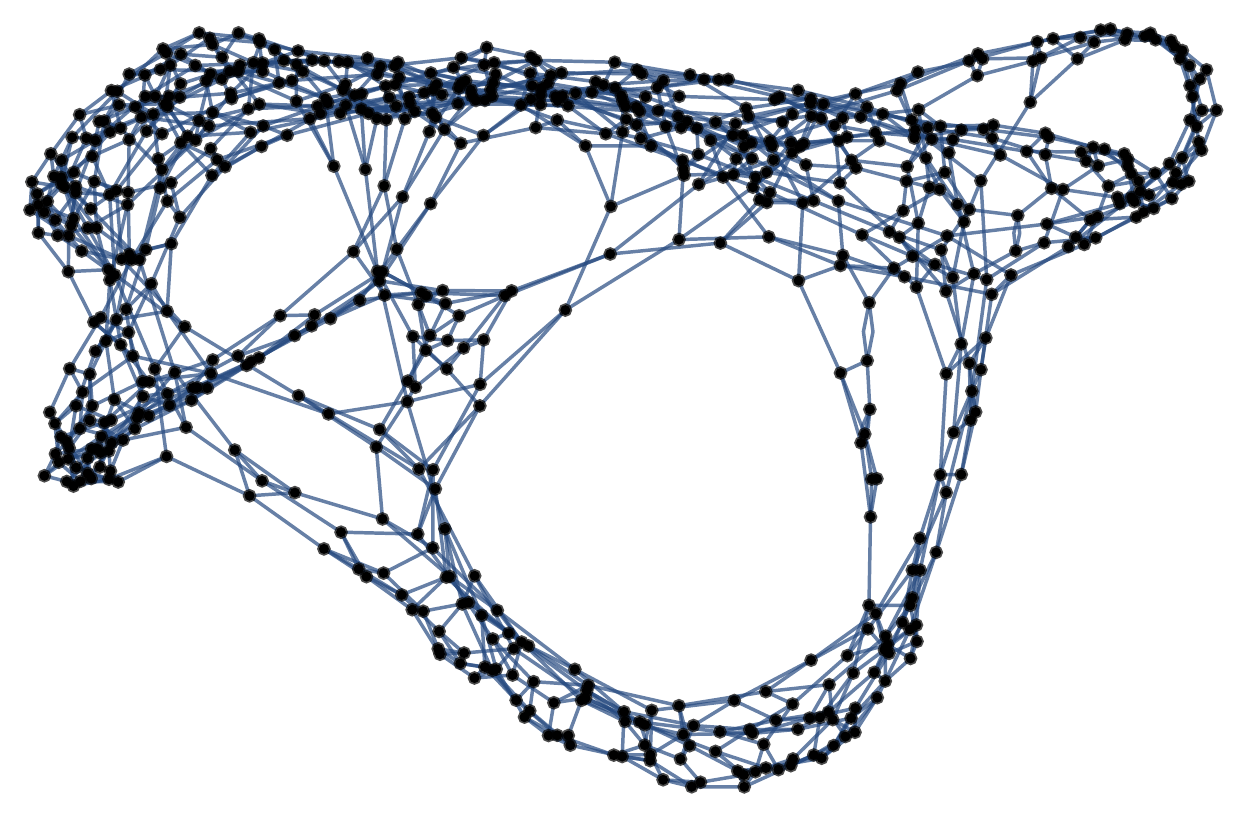}};
\end{tikzpicture}
\caption{Quet's sequence (left) and Zabolotskiy's sequence (right).}
\label{fig:quet}
\end{figure}
\end{center}

\subsection{Gray Encoding} Before defining the sequence, we define the `derivative' of an integer: write $n$ in binary and then replace each adjacent pair of bits by their sum mod 2. For example, we have
$$ 18  = 10011_2$$ 
and thus the derivative is given by the digits $(1+0), (0+0), (0+1), (1+1)$ (all taken mod 2), which correspond to $1010_2 = 10$. Thus, in this language, the derivative of 18 is 10. The `derivative sequence' is A038554 in the OEIS. Having introduced this notion, we can now define A006068 as the permutation of the integers in such an order that their derivatives are lexicographically ordered. 
To be more precise, the sequence of derivatives (A038554) starts, written in binary, as
$$ -,-,1,0,10,11,01,00,100,101,111,110,010,011,001,000, \dots$$
We see that the proper lexicographic ordering is $0,1,3,2,7,6,4,5,15,\dots$ which is exactly the sequence under consideration (see Fig. \ref{fig:gray}).

\begin{center}
\begin{figure}[h!]
\begin{tikzpicture}
\node at (0,0) {\includegraphics[width=0.4\textwidth]{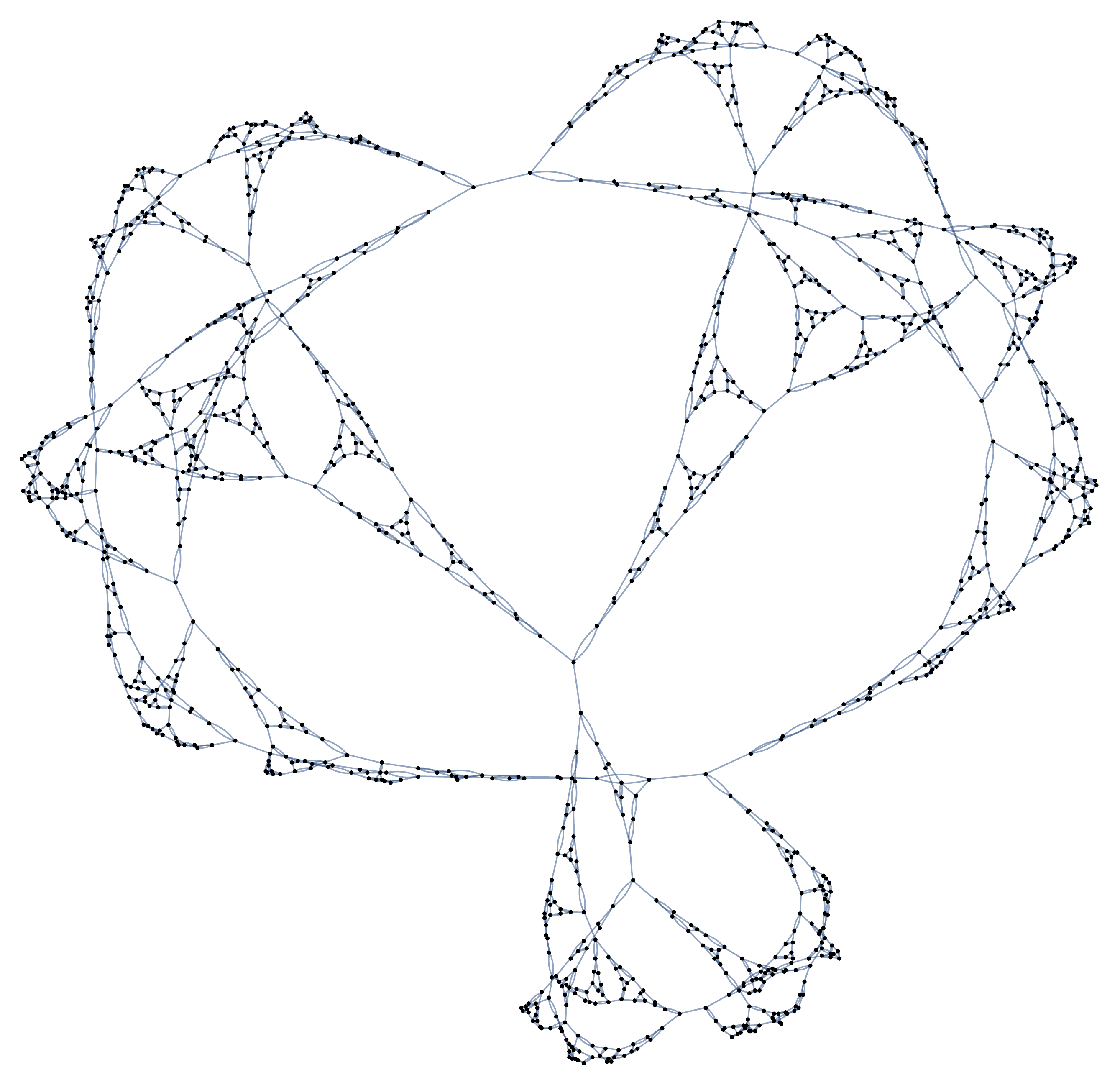}};
\node at (6,0) {\includegraphics[width=0.4\textwidth]{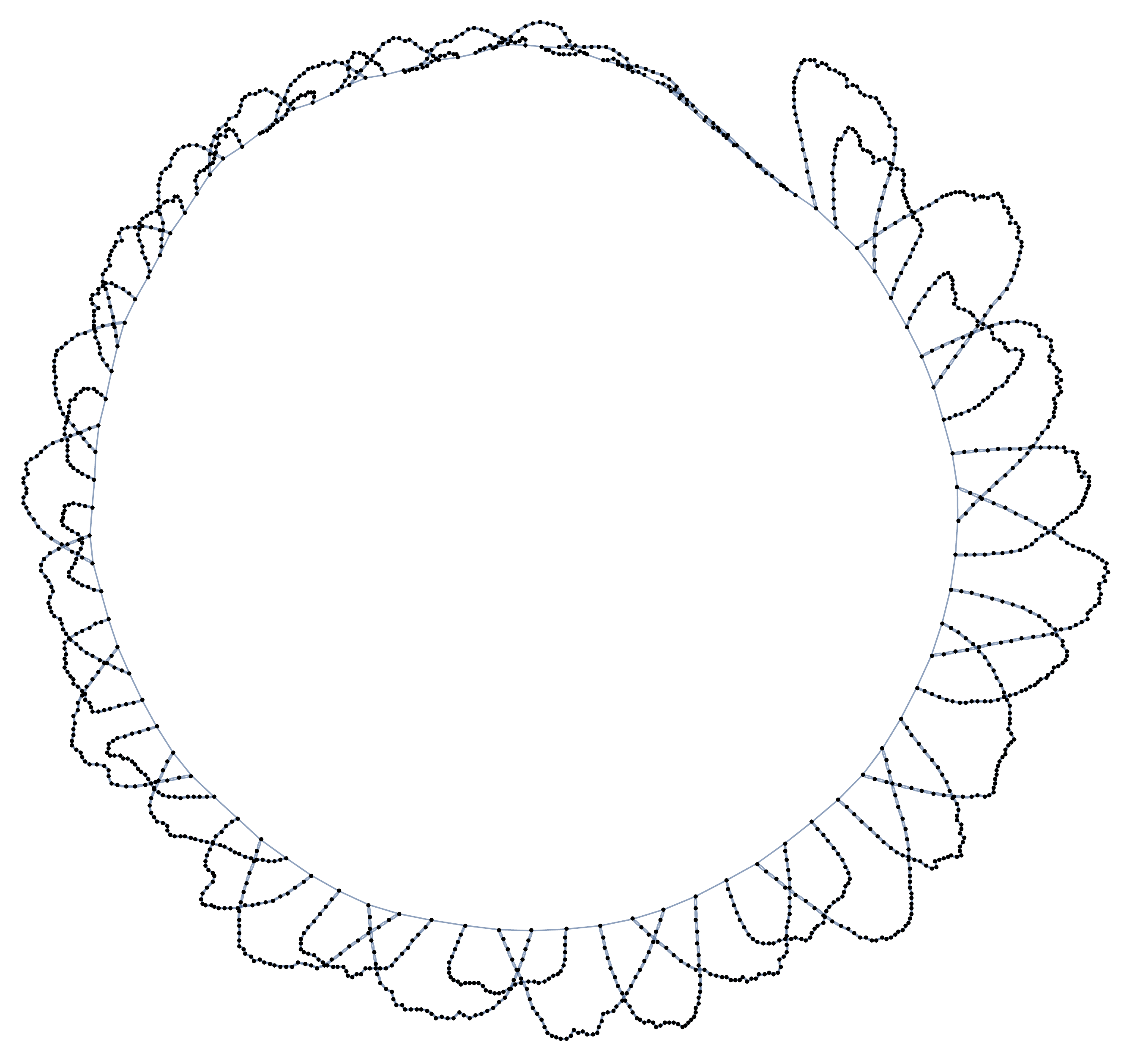}};
\end{tikzpicture}
\caption{Graphs for the Gray encoding (left) and A140589 (right).}
\label{fig:gray}
\end{figure}
\end{center}

\subsection{Two Sums of Powers of 2} This fairly simple sequence is given by defining
$$ a_{k,n} = (-2)^k + 2^n, \; 0\le n\le k$$
which is then read by rows leading to the sequence
$$2, -1, 0, 5, 6, 8, -7, -6, -4, 0, 17, 18, 20, 24, 32, \dots$$
This example is simple enough that we can analyze it completely (see Fig. \ref{fig:gray}). Recall the in order to construct the graph described in the section \ref{subsection:Graph}, we need the sequence $a_{k, n}$ to be a sequence of distinct element. It is easy to see that there are repeated entries in the sequence, in particular, $a_{k, n}=0$ for $k$ odd and $n=k.$ But these are the only duplicates in the sequence. Our next lemma makes this precise.
\begin{lemma}
\label{Lemma:Injectivity_A140589} Let $a_{k, n}$ be as defined in \S 2.13. For $0\le n\le k_1$ and $0\le m\le k_2,$ such that $(k_1, n)\neq (k_2, m),$ we have $a_{k_1, n}=a_{k_2, m}$ if and only if $k_1, k_2$ are odd and $n=k_1$ and $m=k_2.$ 
\end{lemma}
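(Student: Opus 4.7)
The plan is to verify both directions directly by splitting on the parities of $k_1$ and $k_2$. Writing $\varepsilon_k := (-1)^k$, the hypothesis $a_{k_1,n}=a_{k_2,m}$ becomes
\[
\varepsilon_{k_1}\,2^{k_1} + 2^n \;=\; \varepsilon_{k_2}\,2^{k_2} + 2^m,
\]
and I expect each parity case to collapse by uniqueness of binary expansions or by a crude size comparison.

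For the ``if'' direction, I would simply observe that when $k$ is odd, $a_{k,k} = -2^k + 2^k = 0$, so all pairs $(k,k)$ with $k$ odd give the same value $0$. This already produces the family of duplicates claimed in the lemma.

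For the ``only if'' direction, I would proceed by cases. \emph{Case 1 (both $k_i$ even).} The equation becomes $2^{k_1}+2^n = 2^{k_2}+2^m$. If $n<k_1$ and $m<k_2$, the two sides are binary expansions with at most two set bits and the multisets $\{k_1,n\}$, $\{k_2,m\}$ must coincide, forcing $(k_1,n)=(k_2,m)$. The borderline subcases $n=k_1$ or $m=k_2$ collapse one side to a single power $2^{k_i+1}$; comparing this with the other side (itself a one- or two-bit expansion) again forces $(k_1,n)=(k_2,m)$. \emph{Case 2 (both $k_i$ odd).} Rewrite as $2^{k_1}-2^n = 2^{k_2}-2^m$. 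If $n=k_1$ then both sides vanish, forcing $m=k_2$ as well, which yields precisely the duplicates described in the lemma. Otherwise, $n<k_1$ and $m<k_2$, and one can read off $n=m$ by comparing $2$-adic valuations (since $2^{k_i}-2^{\cdot}=2^{\cdot}(2^{k_i-\cdot}-1)$ with an odd second factor), and then $k_1=k_2$. \emph{Case 3 (mixed parity).} Say $k_1$ even and $k_2$ odd; the equation rearranges to $2^{k_1}+2^{k_2}+2^n = 2^m$ with $m\le k_2$, but the left-hand side strictly exceeds $2^{k_2}\ge 2^m$, a contradiction. The symmetric subcase ($k_1$ odd, $k_2$ even) is handled the same way: $2^n = 2^{k_1}+2^{k_2}+2^m$ has right-hand side $>2^{k_1}\ge 2^n$.

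There is no real obstacle here; the only delicate point is bookkeeping the boundary subcases of Case~1 where $n=k_1$ collapses $2^{k_1}+2^n$ into the single power $2^{k_1+1}$, since this is exactly where uniqueness of binary expansions can fail in a naive form. I would handle this by explicitly noting that $2^a+2^b$ equals a power of $2$ only when $a=b$, which reduces each borderline subcase to a single comparison of exponents. Everything else is a size estimate or a valuation computation.
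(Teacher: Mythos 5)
Your proof is correct, and it follows the same broad outline as the paper (the trivial ``if'' direction via $a_{k,k}=0$ for $k$ odd, then a case split on the parities of $k_1,k_2$), but the mechanism inside the cases is genuinely different. The paper first rules out mixed parity by observing that $a_{k,j}\le 0$ for $k$ odd and $a_{k,j}>0$ for $k$ even (your rearrangement $2^{k_1}+2^{k_2}+2^n=2^m$ with a size contradiction is the same fact in disguise), and then, assuming $k_1<k_2$ of equal parity, finishes by crude size estimates: in the even case $a_{k_1,n}\le 2^{k_1+1}<2^{k_2}<a_{k_2,m}$, and in the odd case the values lie in the disjoint intervals $[-2^{k_1}+1,-2^{k_1-1}]$ and $[-2^{k_2}+1,-2^{k_2-1}]$. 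You instead argue by exact arithmetic: uniqueness of binary expansions (together with the observation that $2^a+2^b$ is a power of $2$ only if $a=b$) in the even case, and a $2$-adic valuation comparison via $2^{k}-2^{n}=2^{n}(2^{k-n}-1)$ in the odd case. Both routes are elementary and short; your version has the mild advantage of treating $k_1=k_2$ and the boundary subcases ($n=k_1$ or $m=k_2$) uniformly, whereas the paper's ``assume $k_1<k_2$'' phrasing leaves those degenerate checks implicit, while the paper's interval argument avoids any bookkeeping about which exponents coincide.
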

\begin{proof}
If part is clearly trivial. Assume that $(k_1, n)\neq (k_2, m).$ If $k$ is odd then $a_{k, j}\le 0$ for all $j$ and if $k$ is even then $a_{k, j}>0$ for all $j.$ It follows that for $a_{k_1, n}=a_{k_2, m}$ to hold, $k_1, k_2$ must have the same parity. 
Assume that $k_1<k_2$ are even. It follows that $a_{k_1, n}\le 2^{k_1+1}<2^{k_2}<a_{k_2, m}.$ Now assume that $k_1<k_2$ are odd and $n<k_1, m<k_2.$ In this case $a_{k_2, m} \in [-2^{k_2}+1, -2^{k_2-1}],$ while $a_{k_1, n}\in [-2^{k_1}+1, -2^{k_1-1}].$ The two intervals are disjoint and therefore $a_{k_1, n}\neq a_{k_2, m}.$
\end{proof}

We consider the subsequence of the above sequence obtained after removing all zeroes from the sequence. We now describe the graph (see Fig. \ref{fig:gray}) obtained from this subsequence. Note that for a fixed $k,$ the sequence $a_{k, m}$ is monotonically increasing.  From the proof of lemma \ref{Lemma:Injectivity_A140589} it is clear that this ordering is preserved even after sorting the whole sequence in increasing order. This suggests that for each $k,$ we have a chain of length $k$ (or $k-1$ if $k$ is odd) corresponding to the subsequence $a_{k, n}.$ This chains are joined end-to-end via single edge to form a full circle. The end-nodes of each chain, that is, the nodes corresponding to $a_{k, 0}$ and $a_{k, k-1_{k \text{ odd}}},$ are joined with the end-nodes of the second next chain. That is, $a_{k, k}$ is connected to $a_{k+2, 0}$ if $k$ is even. If $k$ is odd, $a_{k, k-1}$ is connected to $a_{k+2, k+1}.$ 

To make the above discussion precise, we describe the adjacency matrix of the graph obtained from this sequence in the following lemma. We take the vertex set of this graph  to be $V_N=\{(k, n): 0\le n\le k-1_{n \text{odd}}, k\le N\}$ for some $N.$
\begin{lemma}
\label{Lemma:Adjaceny_A140589}
Fix $N\in \mathbb{N}$ and let $V_N$ be as above. Let $A$ be the matrix with rows and columns indexed by $V$ with entries give as follows:
\[A((k, m), (l, n)) = \begin{cases} 2, & l=k, |m-n|=1\\
1, & l=k+1, m=k-1_{k \text{odd}}, n=0\\
1, & l=k+2, m=k-1_{k \text{odd}},  n=(k+1)1_{ k \text{odd}}\\
1, & (k, n)=(1, 0), (l, m)=(N, N-1_{N \text{odd}})\\
0, & \text{otherwise}
\end{cases}.\]  
Then, $A_N$ is the adjacency matrix of the graph described in \ref{subsection:Graph}
corresponding to the above sequence (after removing zeroes).
\end{lemma}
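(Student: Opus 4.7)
The plan is to verify the stated adjacency matrix by directly enumerating the two collections of edges contributed by the graph construction of \S\ref{subsection:Graph}: those from the cyclic sequence order and those from the cyclic sorted order. Each entry of $A$ will then be the sum of the two multiplicities, and the lemma will follow by inspection.

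First I would read off the sequence-order edges directly from the construction. After removing zeros, the list enumerates each row $a_{k,0}, a_{k,1}, \dots, a_{k,k-1_{k\text{ odd}}}$ in order of increasing $n$ and then continues with row $k+1$. Hence the sequence-order edges are: within each row $k$, the consecutive pairs $(k,m)$--$(k,m+1)$; at each row boundary, the single edge $(k, k-1_{k\text{ odd}})$--$(k+1,0)$; and the cyclic wrap-around from the final vertex back to the first. This already accounts for the second case of the adjacency matrix.

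Second, I would determine the sorted-order edges using the interval analysis already carried out in the proof of Lemma \ref{Lemma:Injectivity_A140589}. That analysis shows row $k$ lies in $[2^k+1,\,2^{k+1}]$ when $k$ is even, and in $[-2^k+1,\,-2^{k-1}]$ when $k$ is odd, and that these intervals are pairwise disjoint. Moreover, within each row $a_{k,n}$ is strictly increasing in $n$. Sorting the whole sequence therefore concatenates the rows into contiguous blocks: first the odd rows in decreasing order of $k$ (most negative to least, terminating at $k=1$), then the even rows in increasing order of $k$, with the within-row order preserved inside each block. The sorted-order edges are consequently: within each row, the same consecutive pairs $(k,m)$--$(k,m+1)$ as before; a single edge joining the top of each block to the bottom of the next block; and the cyclic wrap-around joining the global maximum to the global minimum.

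To finish, I would combine the two lists of edges. Each within-row edge appears once from each ordering and so carries multiplicity $2$, matching the first case. The sequence transitions $(k, k-1_{k\text{ odd}})$--$(k+1,0)$ link rows of \emph{opposite} parity, while the between-block sort transitions link rows of the \emph{same} parity; hence the two families are disjoint and each between-row edge has multiplicity $1$, matching the second and third cases. The two cyclic wrap-arounds provide the remaining edge, accounting for the fourth case. The main obstacle I anticipate is the careful bookkeeping of boundary cases: in particular, identifying which endpoints realize the transition between the last odd block and the first even block (joining $a_{1,0}=-1$ to $a_{0,0}=2$), handling the two small-$k$ rows $k=0$ and $k=1$ that contain only a single vertex each, and verifying that the two cyclic closures do not coincide with any previously listed edge so that no additional doubling of multiplicities occurs.
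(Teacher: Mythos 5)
Your overall route is the same as the paper's: the paper's proof of this lemma is a single sentence deferring to the interval analysis in Lemma \ref{Lemma:Injectivity_A140589}, and your write-up simply carries that analysis out --- within-row monotonicity, pairwise disjoint row intervals, hence the sorted order concatenates the rows into contiguous blocks (odd $k$ in decreasing order, then even $k$ in increasing order) --- and then superimposes the two cyclic orders. That block decomposition is correct and is exactly what the paper intends; in that sense your plan is more detailed than the paper's own proof.

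The gap is in the final combination step. Your claim that the sequence transitions (parity-changing, $k \to k+1$) and the sorted block transitions (parity-preserving, $k \to k+2$) are disjoint fails at the junction of the two parity classes: the sorted upper neighbor of $a_{1,0} = -1$ is $a_{0,0} = 2$ (all other terms are $\leq -4$ or $\geq 5$), and these two vertices are also consecutive in the sequence, so the edge between $(0,0)$ and $(1,0)$ has multiplicity $2$, not $1$. Likewise, the verification you defer --- that the two wrap-arounds avoid all previously listed edges --- actually fails whenever $N$ is odd: the sorted wrap-around joins the global maximum $a_{N-1,N-1}$ to the global minimum $a_{N,0}$, which is precisely the sequence transition from row $N-1$ to row $N$, again producing multiplicity $2$. (Note also that the sequence wrap-around attaches to $(0,0)$, the first term of the sequence, not to $(1,0)$ as in the lemma's fourth case.) You flagged these as boundary bookkeeping but left them unresolved, and your main argument asserts disjointness that these cases contradict; carrying the bookkeeping out honestly shows the displayed matrix must be corrected at exactly these entries (for instance, for $N=2$ or $N=3$ the matrix as written gives the vertex $(0,0)$ degree $2$ rather than $4$). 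To be fair, the paper's one-line proof glosses over the same points, but a complete argument must treat the odd/even junction, the single-vertex rows $k=0,1$, and the two wrap-around edges explicitly, separating the cases $N$ even and $N$ odd.
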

\begin{proof}
The proof follows from the proof of Lemma \ref{Lemma:Injectivity_A140589}.
\end{proof}

\subsection{A Digit Reversal Sequence} This sequence (A059893) is a simple permutation of the integers: if $n$ has expansion
$1 a_1 \dots a_{\ell}$ in base 2, then $a_n$ is given by the integer $1 a_{\ell} \dots a_1$. It is easy to see that this is a permutation. One would
perhaps also expect that it is somehow connected to the van der Corput sequence and it does, visually, look somewhat similar: we see several
small manifolds that are connected via a line.

\begin{center}
\begin{figure}[h!]
\begin{tikzpicture}
\node at (0,0) {\includegraphics[width=0.4\textwidth]{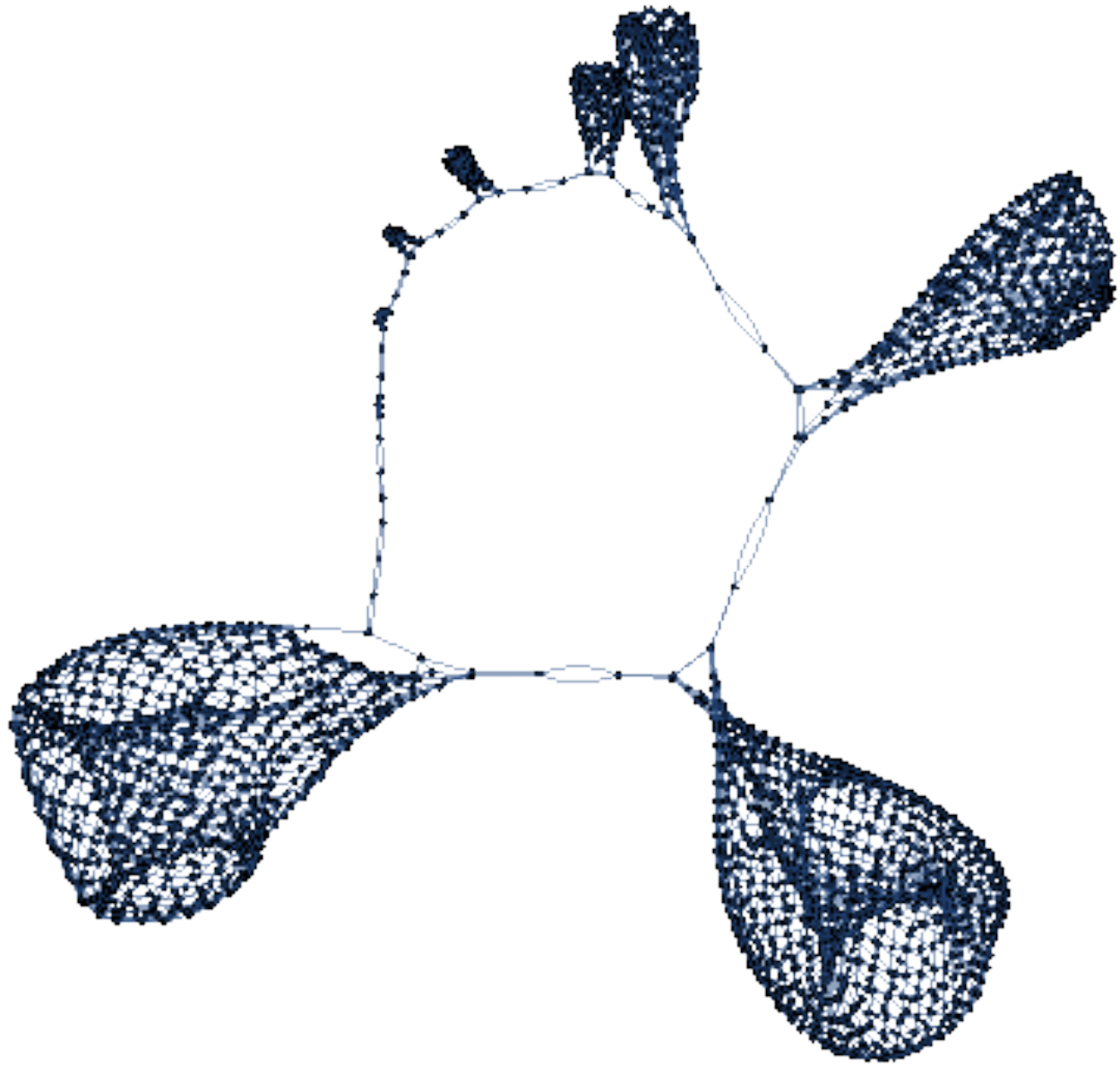}};
\node at (6,0) {\includegraphics[width=0.5\textwidth]{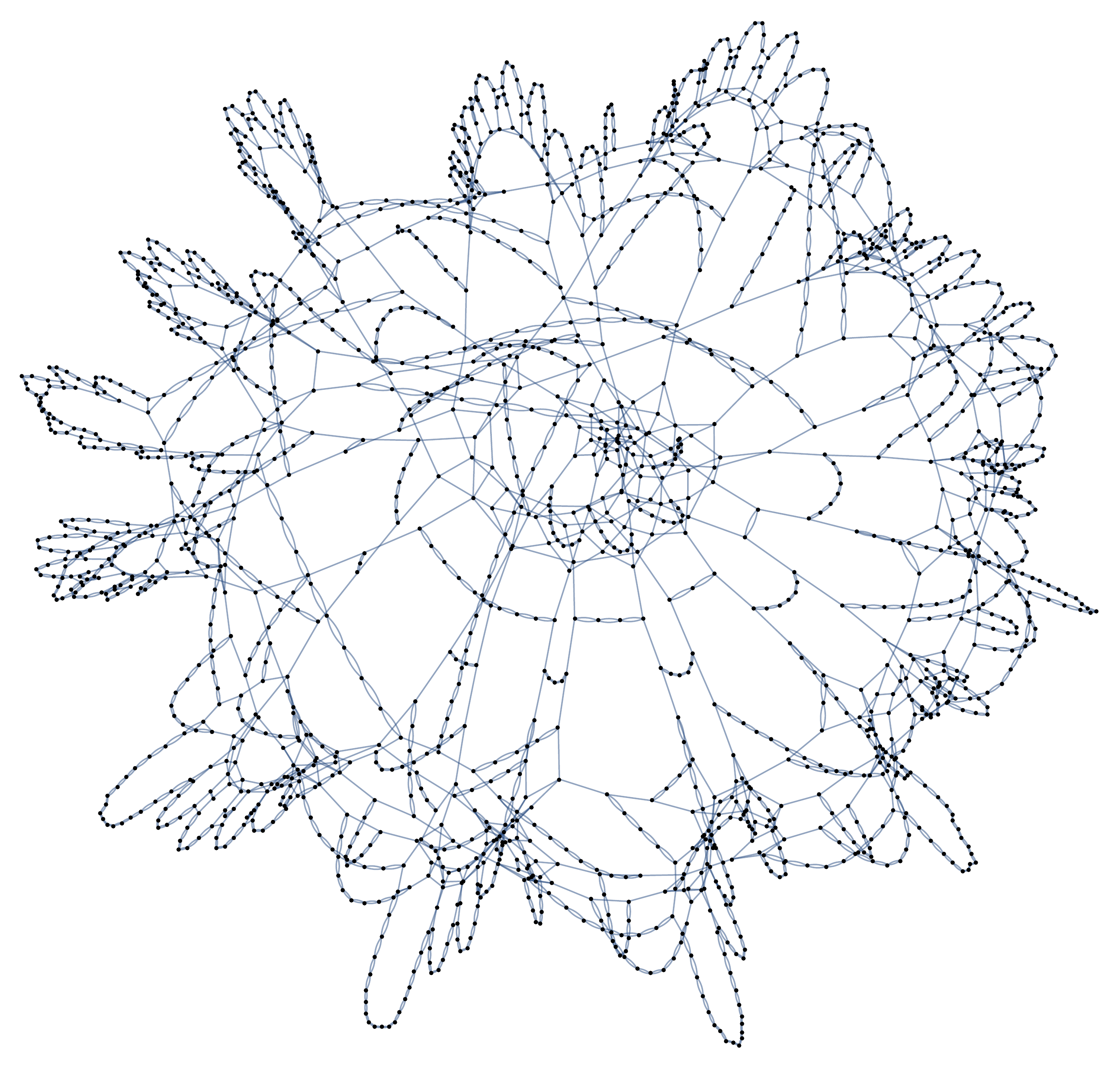}};
\end{tikzpicture}
\caption{Graphs for A059893 (left) and A347520 (right).}
\label{fig:concat}
\end{figure}
\end{center}

\subsection{A Digit Concatenation Sequence} This example is obtained by considering the sequence A053392 and then removing all duplicates resulting in A347520. A053392 is easy to define: we write $n$ in decimal digits, consider each pair of consecutive digits and replace them by their sum. The integer 9327, for example, would lead to $(9+3).(3+2).(2+7) = 12.5.9 = 1259$ and thus $a_{9327} = 1259$. Removing duplicates, we end up with the sequence under consideration -- it results in a striking and highly regular graph, see Fig. \ref{fig:concat}.

\subsection{A Construction of Emeric Deutsch}  A057163 starts 
$$ 0, 1, 3, 2, 8, 7, 6, 5, 4, 22, 21, 20, 18, 17, \dots$$
This sequence is related to a combinatorial construction due to Emeric Deutsch \cite{deutsch} regarding Dyck paths. The sequence itself is due to A. Karttunen.

\begin{center}
\begin{figure}[h!]
\begin{tikzpicture}
\node at (0,0) {\includegraphics[width=0.85\textwidth]{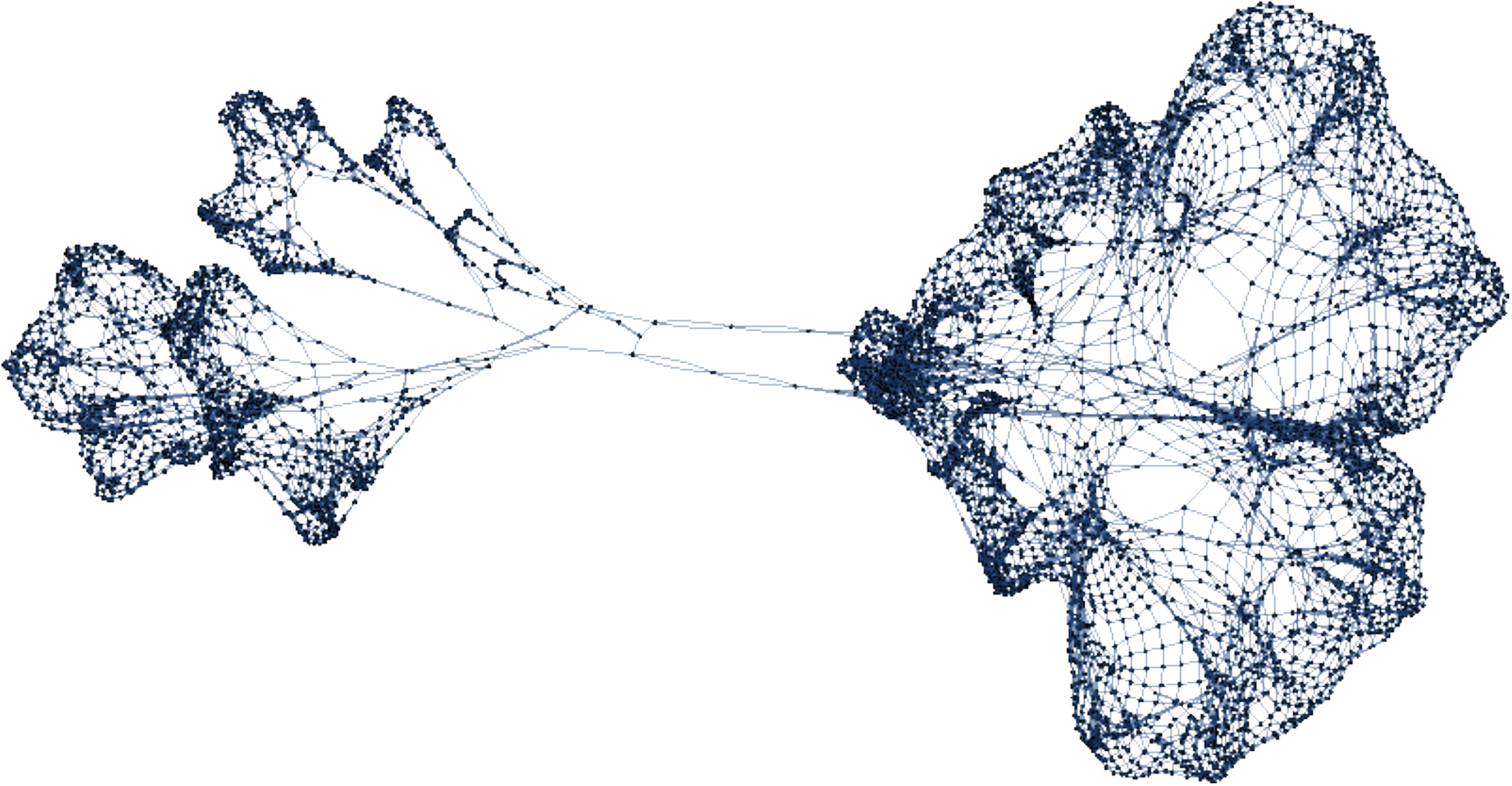}};
\end{tikzpicture}
\caption{Graph for the sequence (A057163).}
\end{figure}
\end{center}

\section{Some Universal Principles}
It is clear that the structure of these graphs is intimately connected to the sequence. As such, we would not expect there to be many universal results. However, while investigating the phenomenon, there are two types of structures that we have encountered several times: we call them the `comets' and the `spirals'.

\subsection{The Comet} The comet is perhaps the shape that we encountered most frequently: we refer to Fig. \ref{fig:comet} for two fairly typical examples. To the best of our knowledge, comets tend to arise when dealing with a sequence $x_n$ with (i) very few long monotonic consecutive subsequences and (ii) monotonic subsequences whose indices do not obey any apparent pattern. A way to obtain comet-like sequences experimentally is as follows: let $X \sim \mathcal{U}(0,1)$ be a random variable that is uniformly distributed on $[0,1]$ and define the (random) sequence
$$ a_n = n + c \cdot X \cdot n \qquad \mbox{where}~c \in \mathbb{R}.$$

\begin{center}
\begin{figure}[h!]
\begin{tikzpicture}
\node at (0,0) {\includegraphics[width=0.4\textwidth]{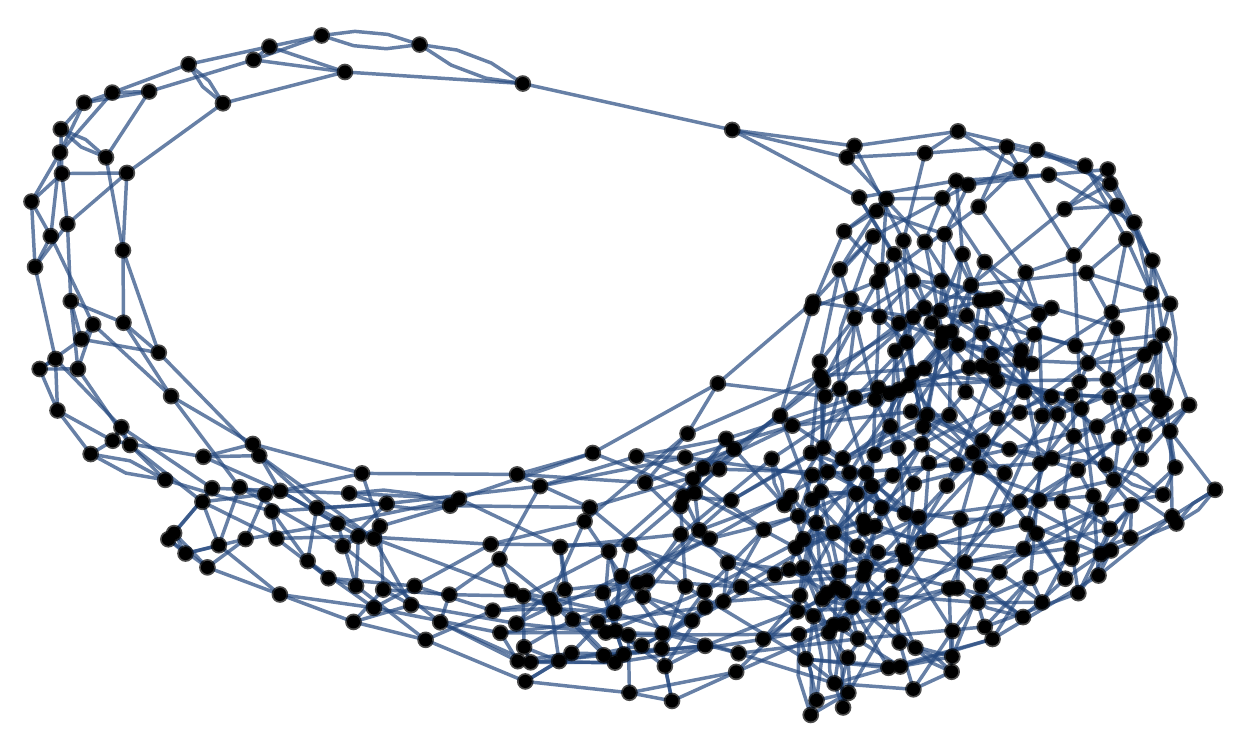}};
\node at (6,0) {\includegraphics[width=0.4\textwidth]{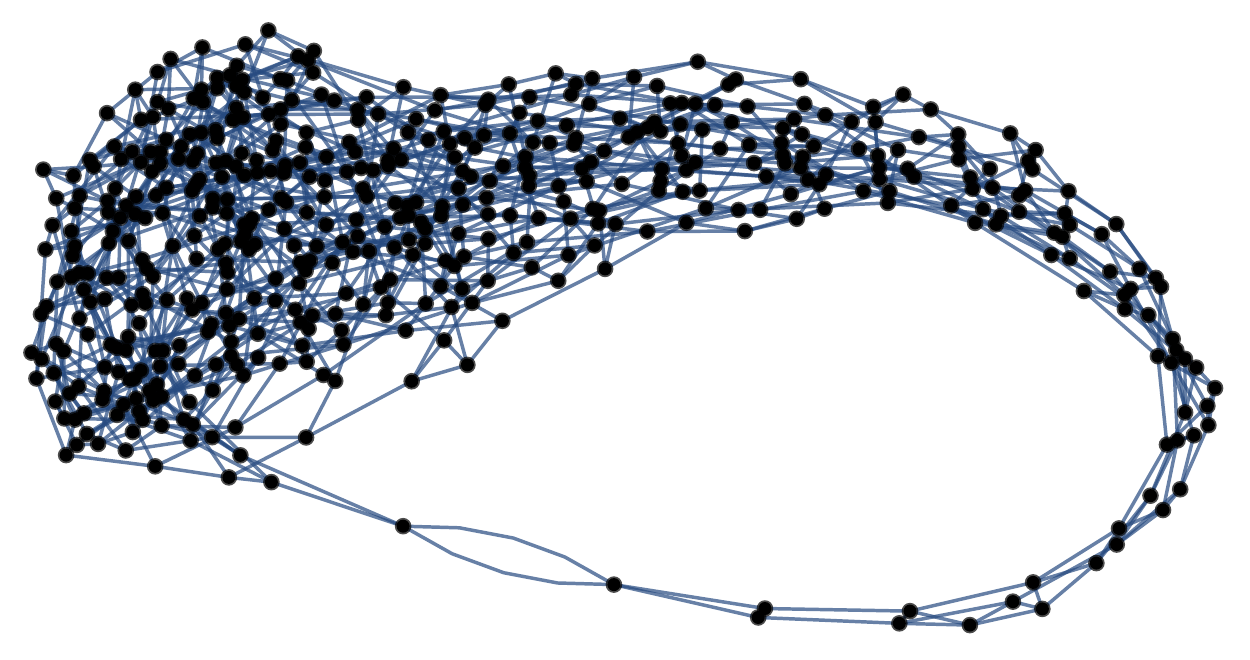}};
\end{tikzpicture}
\caption{A061020 (left) and A280864 (right) are `comets'.}
\label{fig:comet}
\end{figure}
\end{center}

We see that such sequences are growing, on average, but due to the randomness it is difficult to say whether any given element is larger than any other element with a nearby index.  Many integer sequences that lead to comets seem to of such a flavor. Note that our Graph construction is only determined by the ordering of elements but not by their size and thus, naturally, one could also deal with sequences of the form $a_n = e^{n} + c \cdot X \cdot e^{n}$. The constant $c$ governs the `thickness' of the comet.

\subsection{Spirals}
The second type of frequently encountered object looks a bit like a spiral (see Fig. \ref{fig:spiral}). We do not have a clear characterization but it does seem like such sequences arises from having consecutive segments of increasing subsequences. A natural example (also shown in Fig. \ref{fig:spiral}) is the sequences of binomial coefficients in the Pascal triangle read by row: since we remove all duplicates, we only ever read the first half of each row leading to a natural increasing subsequnce.

\begin{center}
\begin{figure}[h!]
\begin{tikzpicture}
\node at (0,0) {\includegraphics[width=0.4\textwidth]{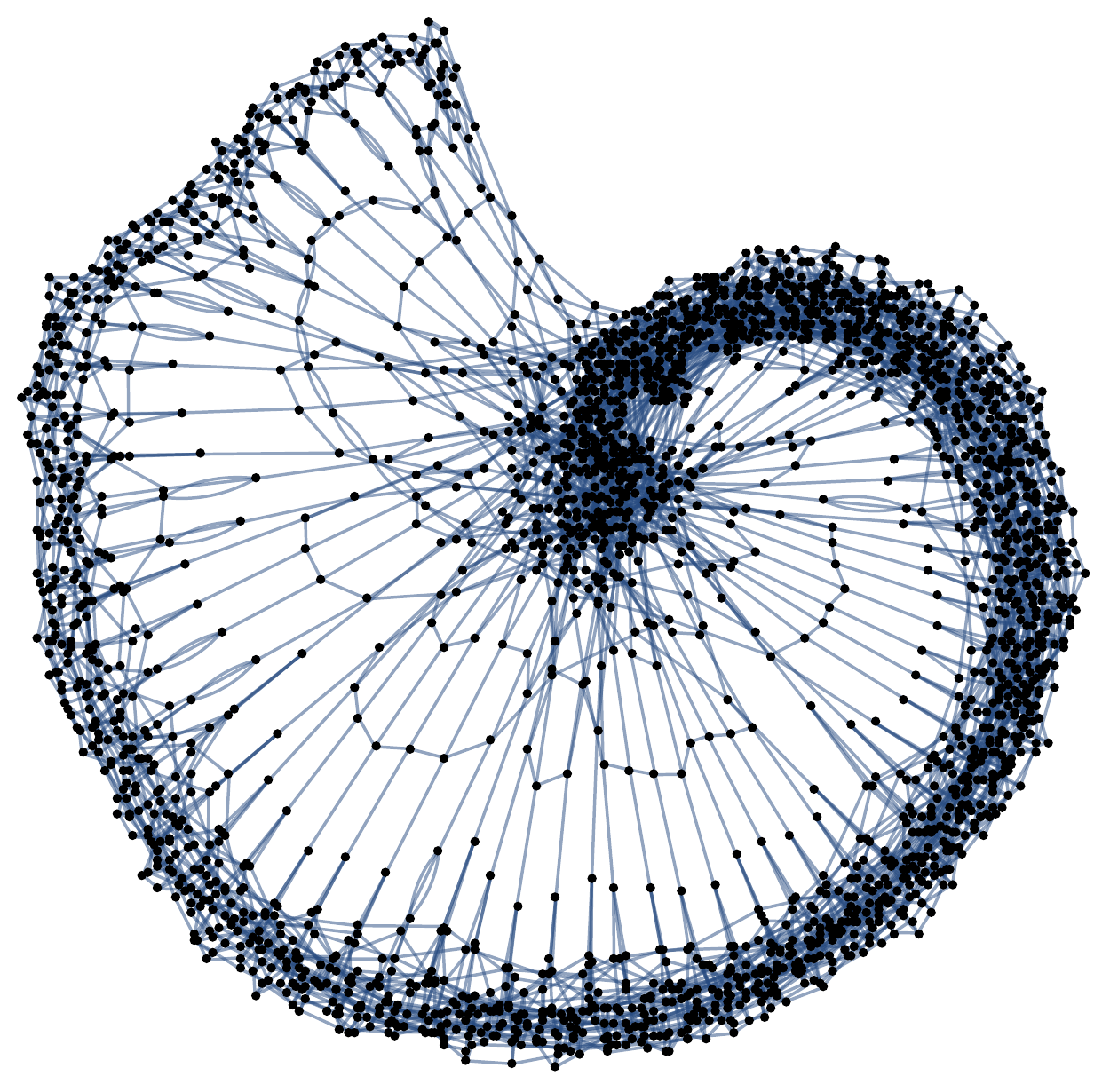}};
\node at (6,0) {\includegraphics[width=0.4\textwidth]{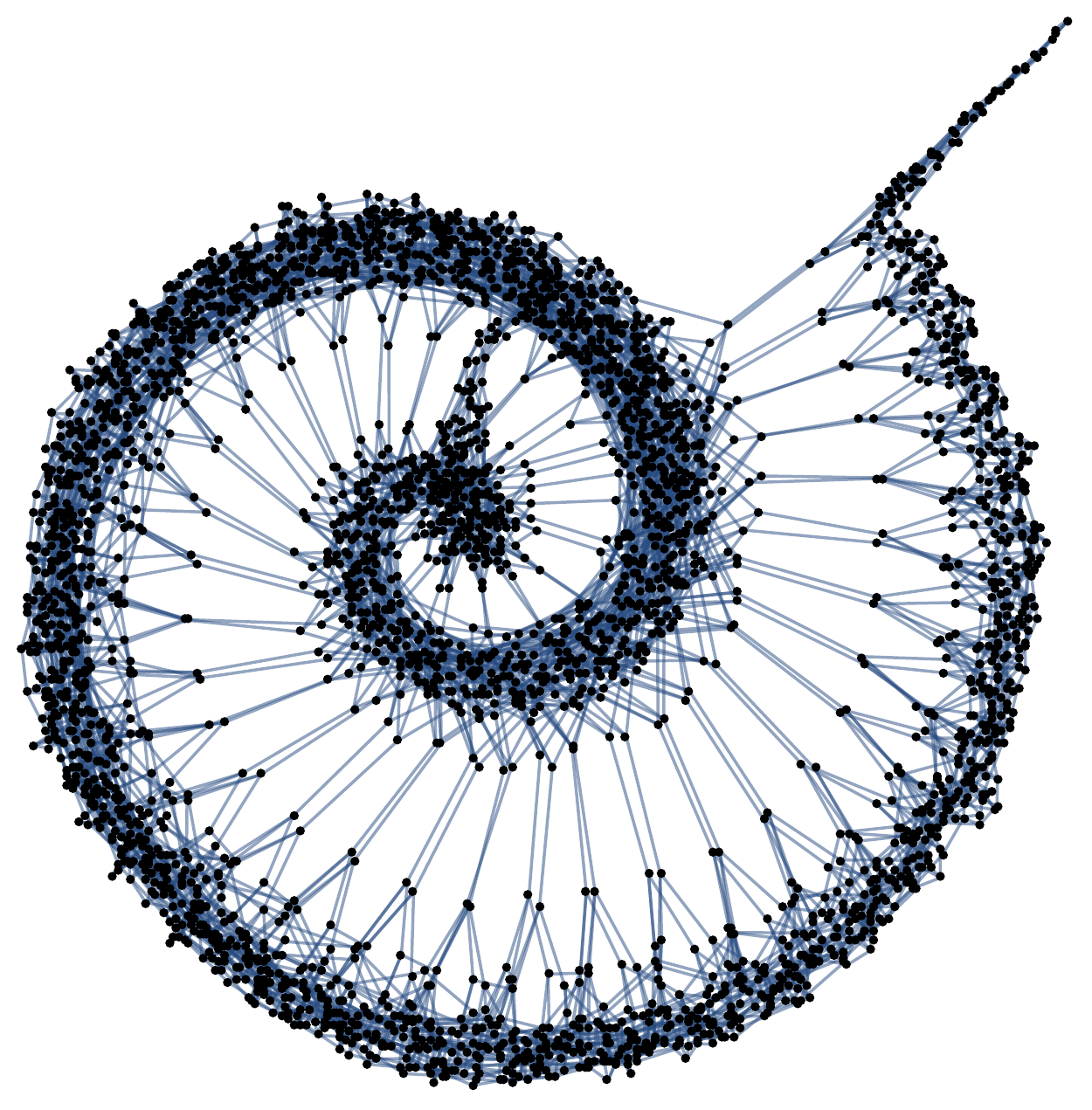}};
\end{tikzpicture}
\caption{Left: A014631 (rows of Pascal's triangle with duplicates removed). Right:  Triangle of nonzero coefficients of Hermite polynomials $H_n(x)$ in increasing powers of $x$ (A059343).}
\label{fig:spiral}
\end{figure}
\end{center}

While trying to come up with a way of `artificially' creating sequences that lead to spirals, we found the following model. Let $f:\mathbb{N} \rightarrow \mathbb{R}_{\geq 0}$ be a slowly increasing sequence, maybe $f(n) = (\log{(n+1)})^3$ or $f(n) = n^{1/10}$. We then define
$$ a(n, k) = \exp\left(k \cdot (f(n) - f(k)) \right)$$
and look at the triangle
\begin{align*}
a(1,1)\\
a(2,1) \quad a(2,2)\\
a(3,1) \quad a(3,2) \quad a(3,3)\\
\dots
\end{align*}
which we then read by rows. After removing the duplicates (note that $a(n,n) = 1$), we end up with sequences that seem to lead to spirals in a natural way. This construction is inspired by binomial coefficient asymptotics, the full extent of the phenomenon is not currently understood.

\subsection{Analyzing graphs.} We recall the definition of our graphs: we define the graph associated to $n$ distinct real numbers $a_1, \dots, a_n$ by adding edges between $a_i$ and $a_{i+1}$ as well as $a_1$ and $a_n$. Then, defining the (unique) permutation 
$\pi: \left\{1,2,\dots,n\right\} \rightarrow \left\{1,2,\dots, n\right\}$ so that
$$a_{\pi(1)} < a_{\pi(2)} < \dots < a_{\pi(n)},$$
we connect $a_{\pi(i)}$ to $a_{\pi(i+1)}$ and conclude by also connecting to $a_{\pi(n)}$ to $a_{\pi(1)}$. One naturally relevant
question is how one would go about discovering `structure' in a graph (or what this even means). We used the eigenvalue as a spectral quantity \S 1.3
in combination with Mathematica's \textsc{Graph} command. For the sake of reproducibility, we quickly explain how this was done using the Kronecker sequence
$a_n = \left\{ n \sqrt{2} \right\}$ as an example. We first create a list containing the first 200 elements of the sequence
\begin{verbatim}
a = Table[N[FractionalPart[Sqrt[2]*i]], {i, 1, 200}];
\end{verbatim}
Having defined the sequence, we define a graph via
\begin{verbatim}

b = Sort[a]; G = Graph[Join[Table[ b[[i]] \[UndirectedEdge] b[[i + 1]], 
{i, 1, Length[b] - 1}],  {a[[1]] \[UndirectedEdge] a[[-1]]}, 
{b[[1]] \[UndirectedEdge] b[[-1]]}, Table[a[[i]] \[UndirectedEdge] 
a[[i + 1]], {i, 1, Length[a] - 1}]]];
    \end{verbatim}
Finally, it remains to see whether there is any structure in the graph. The second largest eigenvalue of the adjacency matrix is given by
\begin{verbatim}
Eigenvalues[N[AdjacencyMatrix[G]]][[2]]
\end{verbatim}
and returns the value $-3.959$ whose absolute value is indeed very close to 4 (a sufficient condition for a lot of structure being present). The
eigenvalue strongly suggests the presence of structure, we would therefore like to have a look at the graph.
Mathematica has the ability to produce a variety of graph embeddings.  Fig. 17 is easy to produce using the code
\begin{verbatim}
Table[Graph[G, GraphLayout -> l,  PlotLabel -> l], {l, 
{"CircularEmbedding", "SpiralEmbedding", "SpringEmbedding", 
"SpringElectricalEmbedding", "HighDimensionalEmbedding",
"BalloonEmbedding", "RadialEmbedding", "GravityEmbedding", 
"SpectralEmbedding"}}]
\end{verbatim}
\begin{center}
\begin{figure}[h!]
\begin{tikzpicture}
\node at (0,0) {\includegraphics[width=0.8\textwidth]{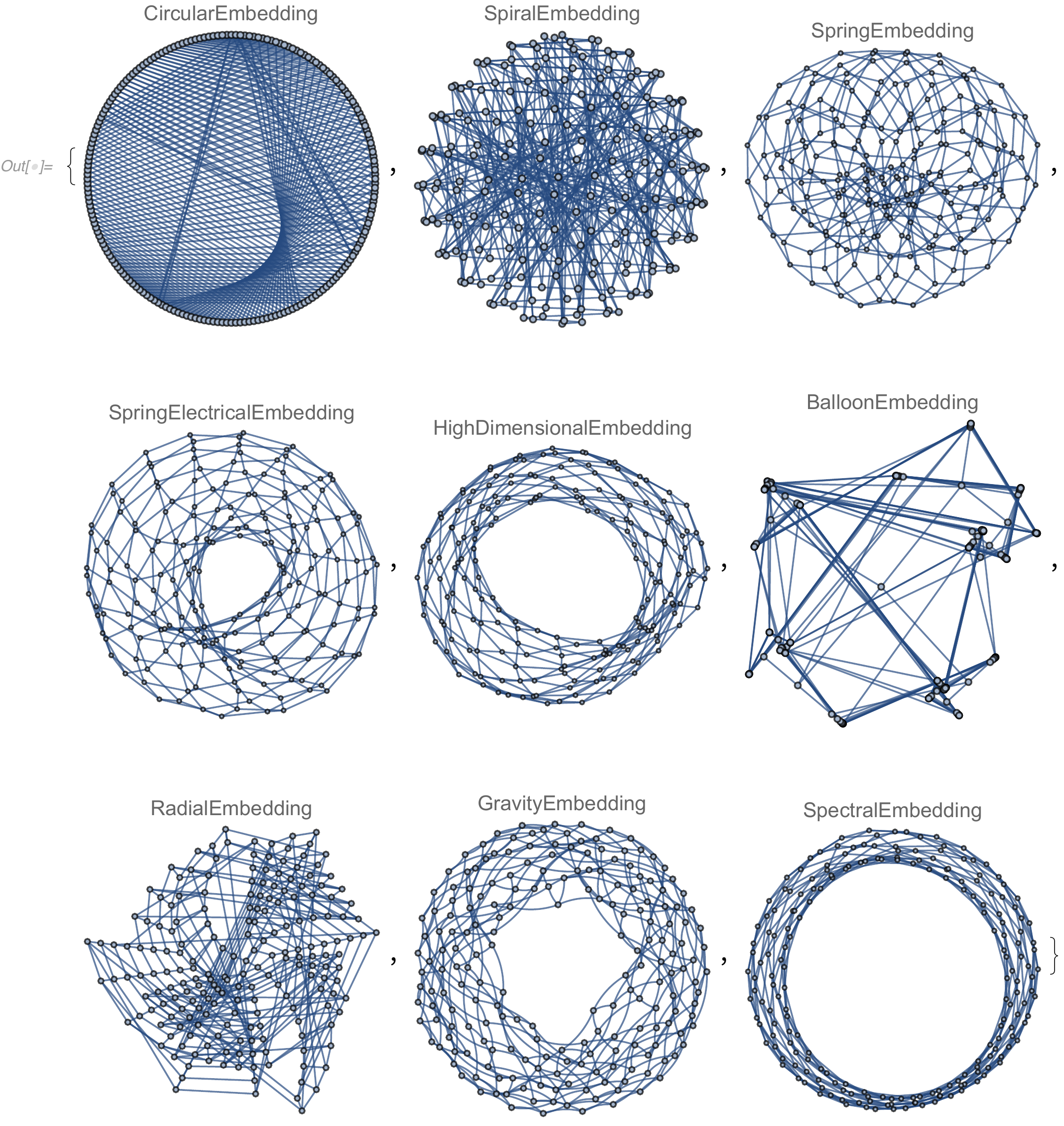}};
\end{tikzpicture}
\caption{Embeddings produced for this particular example.}
\label{fig:spiral}
\end{figure}
\end{center}

We see that some of the embeddings are better suited at revealing the underlying structure than others. However, what
is clearly being shown is that the graph $G$ is indeed highly structured and regular. In practice, we have found that 
spectral embedding methods tend to be very flexible. \\

\textbf{Acknowledgment.} We are grateful to Neil Sloane for helpful discussions.

\end{document}